\documentclass[12pt,reqno]{amsart}   	
\usepackage[letterpaper, margin=1in]{geometry}
\usepackage{graphicx}				
\usepackage{amssymb}
\usepackage{mathtools,amsmath}
\usepackage{amsthm}
\usepackage{amssymb}
\usepackage{mathrsfs}
\usepackage{epstopdf}
\usepackage{color}
\usepackage{enumerate}
\usepackage[pagebackref,colorlinks,citecolor=blue,linkcolor=magenta]{hyperref}
\usepackage[capitalise]{cleveref}
\usepackage{tikz}

\usepackage[linesnumbered,ruled]{algorithm2e}
\RequirePackage{amsthm,amsmath,amsfonts,amssymb}
\usepackage[utf8]{inputenc}
\usepackage{chngcntr}
\usepackage{float}

\theoremstyle{plain}
\newtheorem{prop}{Proposition}[section]
\newtheorem{thm}[prop]{Theorem}
\newtheorem{cor}[prop]{Corollary}
\newtheorem{lem}[prop]{Lemma}
\newtheorem{conj}[prop]{Conjecture}
\newtheorem{question}[prop]{Question}
\newtheorem*{thm*}{Theorem}

\theoremstyle{definition}
\newtheorem{dfn}[prop]{Definition}
\newtheorem{rem}[prop]{Remark}
\newtheorem{example}[prop]{Example}

\renewcommand{\iff}{\Leftrightarrow}

\newcommand{\C}{{\mathbb{C}}}
\renewcommand{\P}{{\mathbb{P}}}
\newcommand{\R}{{\mathbb{R}}}

\newcommand{\N}{{\mathbb{N}}}
\newcommand{\Z}{{\mathbb{Z}}}

\newcommand{\OO}{{\mathcal{O}}}

    \DeclareMathOperator{\interior}{int}

\DeclareMathOperator{\gl}{GL}

\DeclareMathOperator{\spn}{span}
\DeclareMathOperator{\codim}{codim}

\DeclareMathOperator{\dphi}{d\phi}

\DeclareMathOperator{\cone}{cone}

\newcommand{\cc}{change of coordinates}
\DeclareMathOperator{\face}{\mathfrak{F}}

\newcommand{\Rx}{\mathbb{R}[\ul{x}]}

\newcommand{\vc}{\mathcal{V}}

\newcommand{\du}{{\scriptscriptstyle\vee}}
\renewcommand{\setminus}{\smallsetminus}
\newcommand{\ol}{\overline}
\newcommand{\ul}{\underline}

\newcommand{\all}{\forall\,}

\renewcommand{\subset}{\subseteq}
\renewcommand{\supset}{\supseteq}
\newcommand{\bil}[2]{\langle{#1},{#2}\rangle}

\newcommand{\id}[1]{\langle #1 \rangle}
\newcommand{\sa}{semi-alge\-braic}
\newcommand{\py}{p}
\newcommand{\bin}{\Sigma_{2,8}^4}
\newcommand{\ter}{\Sigma_{3,4}^2}

\newcommand{\minus}{\text{-}}

\renewcommand{\setminus}{\smallsetminus}
\renewcommand{\epsilon}{\varepsilon}
\renewcommand{\theta}{\vartheta}

\newcommand{\todfn}[1]{\textit{#1}}


\title{On higher Pythagoras numbers of polynomial rings}
\author{}
\date{\today}

\author{Tomasz Kowalczyk}
\address{Institute of Mathematics, Faculty of Mathematics and Computer Science, Jagiellonian University, ul. Łojasiewicza 6, 30-348 Kraków, Poland}
\email{tomek.kowalczyk@uj.edu.pl}

\author{Julian Vill}
\address{Fakult\"at f\"ur Mathematik, Otto-von-Guericke Universit\"at Magdeburg, Universitätsplatz 2, 39106 Magdeburg, Germany}
\email{julian.vill@ovgu.de}

\begin{document}

\keywords{Higher Pythagoras number, Waring problem, binary forms, sums of squares, convex cones.}
\subjclass[2020]{Primary:11P05, 14Q30}

\begin{abstract}
We show that the higher Pythagoras numbers for the polynomial ring are infinite  $p_{2s}(K[x_1,x_2,\dots,x_n])=\infty$ provided that $K$ is a formally real field, $n\geq2$ and $s\geq 1$. This almost fully solves an old question \cite[Problem 8]{cldr1982}. The remaining open cases are precisely $n=1$ and $s>1$. Moreover, we study in detail the cone of binary octics that are sums of fourth powers of quadratic forms. We determine its facial structure as well as its algebraic boundary. This can also be seen as sums of fourth powers of linear forms on the second Veronese of $\P^1$. As a result, we disprove a conjecture of Reznick \cite[Conjecture 7.1]{reznick2011} which states that if a binary form $f$ is a sum of fourth powers, then it can be written as $f=f_1^2+f_2^2$ for some nonnegative forms $f_1,f_2$.
\end{abstract}

\maketitle

\section{Introduction}

This paper is devoted to the qualitative study of sums of higher powers, mainly in polynomial rings. In particular, we are interested in the problem of computing the higher Pythagoras numbers of a given ring.

It is known that the theory of quadratic forms is well developed. Literature devoted to that topic is extremely vast. Every quadratic form over a field of characteristic not two is diagonalizable. This is not true for forms of higher degree (cf. \cite{harrison1975}). In particular considering forms of degree at least $3$ one has to distinguish the cases of diagonal and general forms.

Let $A$ be a commutative ring with identity and $s>1$ be a positive integer.
\begin{dfn}
We define the $s$-th Pythagoras number of $A$,  $p_{s}(A)$, to be the least positive integer $\ell$ such that any sum of $s$-th powers of elements of $A$ can be expressed as a sum of $\ell$ $s$-th powers of elements of $A$. If such a number does not exist, we put $p_{s}(A)=\infty$.
\end{dfn}

Cassels Theorem \cite{cassels1964} states that if a quadratic form with coefficients from a field $K$ represents a polynomial $f \in K[x]$ in a single variable over the field $K(x)$ then it already represents $f$ over $K[x]$. Taking the above quadratic form to be a sum of squares (sos) we deduce the equality of $2$-Pythagoras numbers $p_2(K[x])=p_2(K(x))$. Cassels Theorem does not generalize to higher powers \cite{prestel1984}. In particular, a polynomial in a single variable can be a sum of fourth powers of rational functions, but not a sum of fourth powers of polynomials. Currently, this appears to be the main obstruction for computing $p_{2s}(\R[x])$ (cf. \cref{sec:Open problems}), however it is known that $p_{2s}(\R(x))$ is finite \cite{becker1982}.

Let us also mention one more difference. Consider the usual $n$-dimensional unit sphere $\mathbb{S}^n$. Obviously, it is the zero set of the quadratic polynomial $\sum_{i=1}^{n+1} x_i^2 -1=0$. If one further considers the group of linear isometries of $\mathbb{S}^n$, then it is known that this action is transitive.
Consider now an algebraic set given by the zero set of $\sum_{i=1}^{n+1}x_i^{2s}-1=0$. There are very few linear isometries of such a set if $s>1$.  In particular, the action of linear isometries is not transitive. This follows from the fact that $2s$-norm of $\ell^{2s}(\R^n)$ is not induced by the inner product for $n>1$ (see also \cite{ding2003, ls1994} for the structure of linear isometries of $\ell^{2s}(\R^n)$). Necessarily, any point with some fixed number of zero coordinates cannot be moved to an element with a different number of zeros. Consequently, when dealing with forms of higher degree, one cannot usually simplify things by the use of matrices. One has to consider the general case.

There is a vast literature devoted to the problem of computation of the $2$-Pythagoras number of a ring, see  \cite{benoist2017, benoist2020, fernando2021,hoffman1999, krasensky2022, krs2022, kp2023} to mention just a few. Similarly there are recent results concerning Waring problems \cite{adaz2020,bs2023, bs2023-2, demiroglu2019, kishore2022, msw2023, pliego2021,pv2021, wooley2016} and rings of $k$-regulous functions \cite{baneckik2023} as well as henselian local rings \cite{mk2022}. On the other hand, there are very few papers dealing with sums of higher powers in real rings, see \cite{becker1982, clpr1996, grimm2015, schmid1994} for higher Pythagoras numbers of fields and \cite{cr1990, ruiz1985, fq2004} for algebroid curves. Beside those, it is known that $p_{2s}(\Z[x])=\infty $ \cite{cldr1982}.

 Let $A$ be a commutative ring with identity, $s\geq1$ be a positive integer and $a \in A$.
\begin{dfn}
We define the $s$-length of $a$ as the smallest positive integer $\ell$ such that $a$ can be written as a sum of $\ell$ $s$-th powers of elements of $A$.
\end{dfn}

For any $n,s,d\in\N$ we consider the cone $\Sigma_{n,sd}^s\subset\R[x_1,\dots,x_n]_{sd}$ of sums of $s$-th powers of forms of degree $d$. If $s$ is even this is a closed, pointed and full-dimensional convex cone. By definition, any form $f$ in $\Sigma_{n,sd}^s$ has a representation $f=\sum_{i=1}^r f_i^s$ for some $r\in\N$ and $f_i\in\R[x_1,\dots,x_n]_d$. The smallest such $r$ is called the $s$-length of $f$.

\begin{dfn}[homogeneous Pythagoras number]
We define the $s$-Pythagoras number $p_s(n,sd)$ of $\R[x_1,\dots,x_n]_{sd}$ as the minimal number $\ell$ such that any $f\in\Sigma_{n,sd}^s$ has such a representation of length at most $\ell$. 
\end{dfn}

The goal of this paper is to study the higher Pythagoras numbers of the polynomial rings. In particular we prove that $p_{2s}(K[x_1,x_2,\dots,x_n])=\infty$ provided that $K$ is a real field and $n>1$. This almost solves an open problem from the paper \cite[Problem 8]{cldr1982}. Cases that are still open are precisely $n=1$ and $s\geq 2$. From the above, we are able to deduce that for a large class of rings, all of its higher Pythagoras numbers are infinite. The second part of the paper focuses on the study of the 4-Pythagoras number. 
It follows from the first part, that similarly as in the quadratic case,  the homogenous $2s$-Pythagoras numbers in polynomial rings tends to infinity if we increase the degree as long as we consider forms in at least 3 variables. This led us to study binary forms, especially fourth powers, as this is the first non quadratic case. For any fixed degree $4d $ there exists a full-dimensional semi-algebraic subset of $\Sigma_{2,4d}^4$ such that any form in it has 4-length at most 4. If the degree is at least 10, then this subset consists of elements of precisely 4-length 4. This is also the smallest length that appears in a full-dimensional subset. Writing binary forms as sums of powers of linear forms has already been done by Reznick.

Our main results concern binary octics. We determine the algebraic boundary of the cone $\Sigma_{2,8}^4$ by studying its dual cone which can be seen as a linear slice of the cone $\Sigma_{3,4}^2$ of ternary quartics that are sums of squares, or equivalently are positive semidefinite (psd). Surprisingly, it turns out that the boundary is irreducible and a generic point of the boundary has a 'special' length 3 representation as a sum of fourth powers of quadratics. Moreover, this is the unique representation as a sum of fourth powers. This allows us to deduce that the 4-Pythagoras number is either 3 or 4.
We are then able to disprove a conjecture of Reznick \cite[Conjecture 7.1]{reznick2011} expecting that sums of fourth powers are ``doubly positive" meaning they can be written as sums of squares of psd forms.

The structure of the paper is as follows. \cref{sec:Higher Pythagoras numbers} contains the study of the higher Pythagoras number of a variety of rings. The main result of this section is the computation of the higher even Pythagoras numbers for the polynomial ring, i.e, $p_{2s}(K[x_1,x_2,\dots,x_n])=\infty$. In \cref{sec:Binary forms} we examine the structure of the cone $\bin$ of binary octics that are sums of fourth powers of quadratics as well as their $4$-Pythagoras number. In the next section, we determine the facial structure of the aforementioned cone and disprove Reznick's Conjecture. We finish the paper with some open problems.

\begin{rem}
In this paper we consider only even powers of homogeneous polynomials. This is because of the following observation. Let $f$ be a homogeneous polynomial of degree $sd$ and consider the $s$-length of $f$. If $s$ is even, then the $s$-length of $f$ in the ring $\R[x_1,\dots,x_n]$ will be the same as in the vector space  $\R[x_1,\dots, x_n]_{sd}$. However, if $s$ is odd, then those lengths will almost surely be different, since if $f$ is a sum of odd powers of polynomials $f_i$, then the $f_i$'s do not have to be homogeneous.

\end{rem}


Elements of the vector space $\R[x_1,\dots, x_n]_{sd}$ will be called $n$-ary $sd$-ics.

\section{Higher Pythagoras numbers}
\label{sec:Higher Pythagoras numbers}
In this section we discuss higher Pythagoras numbers of rings. We are mainly concerned with the polynomial rings over a real field.
In particular, $\py_2(n,2d)$ corresponds to shortest sums of squares representations of forms. Even these are not well-understood in general (see \cite{scheiderer2017}).
One of the main difficulties in studying higher Pythagoras numbers is that we are no longer considering quadratic forms which generally behave much nicer than higher tensor powers.

Firstly, let us show a lower bound for $p_{2s}(n,2sd)$ by counting dimensions. The dimension of  $\R[x_1,\dots, x_n]_{2sd}$ is equal to $n+2sd-1 \choose 2sd$ and this is also the dimension of $\Sigma^{2s}_{n,2sd}$ as this cone contains a generating set $\{l^{2sd}\colon l\in\R[x_1,\dots,x_n]_1\}$ (see \cite[page 2]{reznick1992}) of the real vector space $\R[x_1,\dots, x_n]_{2sd}$. The $2s$-th Pythagoras number $p_{2s}(n, 2sd)$ has to satisfy 
$$p_{2s}(n,2sd){n+d-1 \choose d} \geq {n+2sd-1 \choose 2sd}.$$
In other words, 
$$p_{2s}(n,2sd) \geq \frac{{n+2sd-1 \choose 2sd}}{{n+d-1 \choose d}}.$$
A simple computation shows that if $d$ tends to infinity, then the right hand side tends to a constant value $(2s)^{n-1}$. Upon dehomogenization, this can be translated into the inequality
$$p_{2s}(\R[x_1,\dots, x_{n-1}])\geq (2s)^{n-1}.$$
In particular $p_4(\R[x])\geq 4$.
It is not a priori clear that the numbers $p_{2s}(n,2sd)$ are finite. However, Caratheodory Theorem \cite[Proposition 2.3]{reznick1992}
 gives upper bound of the form
$$p_{2s}(n,2sd) \leq  {n+2sd-1 \choose 2sd}.$$

Our first result is a generalization of \cite[Proposition 4.5']{cldr1982} 
to higher even powers. Let $s>1$ be a positive integer and $K$ be a formally real field. 

\begin{prop}\label{length G}
Let $g(x,y)\in K[x,y]$ be a polynomial of $2s$-length $k$ and $r$ be a positive integer such that $\deg_xg < 2sr$. Then the polynomial $G(x,y):=g(x,y)(y-x^r)^{2s}+1$ has $2s$-length $k+1$.

\begin{proof}
Assume by contrary that this is not the case, i.e. 
\begin{equation}
\label{eq:Gxy}
G(x,y)=\sum_{i=1}^k g_i^{2s}.
\end{equation}
By setting $y=x^r$ we obtain that for each $i$, $g_i=a_i+(y-x^r)h_i$, where $a_i \in K$ and $h_i \in K[x,y]$. We may now rewrite \cref{eq:Gxy} as 
$$g(x,y)(y-x^r)^{2s}+1=\sum_{i=1}^k a_i^{2s}+(y-x^r) {2s \choose 1} \sum_{i=1}^k a_i^{2s-1}h_i + (y-x^r)^2 {2s \choose 2}  \sum_{i=1}^k a_i^{2s-2}h_i^2+\dots +$$
$$+ (y-x^r)^{2s}\sum_{i=1}^k h_i^{2s}.$$
Since $\sum_{i=1}^k a_i^{2s}=1$, both $1$'s cancel out. As a next step, after dividing by $(y-x^r)$, we see that $\sum_{i=1}^k a_i^{2s-1}h_i$ is divisible by $(y-x^r)$. By degree considerations, we see that $\sum_{i=1}^k a_i^{2s-1}h_i = 0$. If not, then the degree of some $h_i$ would be at least $r$, hence the degree of the right-hand side of \cref{eq:Gxy} would be at least $4sr$, and this is larger than the degree of the left-hand side.
Consequently, the above expression can be reduced to
$$g(x,y)(y-x^r)^{2s-2}= {2s \choose 2}  \sum_{i=1}^k a_i^{2s-2}h_i^2+\dots + (y-x^r)^{2s-2}\sum_{i=1}^k h_i^{2s}.$$
We may look at the polynomial $\sum_{i=1}^k a_i^{2s-2}h_i^2$. If this polynomial is nonzero, then at least one of the $h_i$ is divisible by $(y-x^r)$ and we get a contradiction as previously.
As a result, we get that 
$$\sum_{i=1}^k a_i^{2s-2}h_i^2 = 0$$
and for each $i=1,2,\dots, k$, necessarily $a_ih_i=0$, as we are working over a formally real field. In particular, for each $i=1,2, \dots, k$, $a_i=0$ or $h_i=0$. Denote by $l$ the number of nonzero $h_i$ polynomials. Of course, $l<k$. After enumerating, we can rewrite \cref{eq:Gxy} as 
$$g(x,y)(y-x^r)^{2s}+1=(y-x^r)^{2s}\sum_{i=1}^lh_i^{2s}+1$$
where we merged all nonzero $a_i$'s into $1$. As the final step we get 
$$(y-x^r)^{2s}(g(x,y)-\sum_{i=1}^lh_i^{2s})= 0,$$ but this implies that the $2s$-length of $g(x,y)$ is less than $k$, a contradiction.
\end{proof}

\end{prop}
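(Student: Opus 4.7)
The plan is to prove $2s$-length$(G) = k+1$ by showing both inequalities. For the upper bound $\leq k+1$, if $g = \sum_{i=1}^k g_i^{2s}$, then $G = 1^{2s} + \sum_{i=1}^k \bigl(g_i \cdot (y-x^r)\bigr)^{2s}$ is a representation of length $k+1$. The substantive content is the lower bound, which I would prove by contradiction: assume $G = \sum_{i=1}^k h_i^{2s}$ and aim to produce a representation of $g$ with strictly fewer than $k$ summands.

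The crucial idea is to specialize $y = x^r$, which kills the $(y-x^r)^{2s}$ factor and reduces the equation to $\sum_i h_i(x, x^r)^{2s} = 1$ in $K[x]$. Because $K$ is formally real, the leading $x$-coefficient of a sum of $2s$-th powers of polynomials is itself a sum of $2s$-th powers of leading coefficients and hence nonzero, so each $h_i(x, x^r)$ must be a constant $a_i \in K$ with $\sum a_i^{2s} = 1$. Writing $h_i = a_i + (y-x^r) q_i$ and expanding the binomials, then cancelling the constant $1$ on both sides of $G = g(y-x^r)^{2s} + 1$, gives
\[ g(x,y)(y-x^r)^{2s} \;=\; \sum_{j=1}^{2s} \binom{2s}{j} (y-x^r)^j \Bigl(\sum_{i=1}^k a_i^{2s-j} q_i^j\Bigr), \]
from which I would peel off powers of $(y-x^r)$ one at a time.

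The main obstacle is controlling the $j=1$ term $\sum a_i^{2s-1} q_i$, and this is exactly where the hypothesis $\deg_x g < 2sr$ enters. A preliminary $x$-degree estimate (again invoking formal reality so that the top $x$-coefficient of $\sum h_i^{2s}$ cannot cancel) forces $\deg_x q_i < r$ for each $i$. Dividing the displayed equation by $(y-x^r)^2$ and reducing modulo $(y-x^r)$ gives $(y-x^r) \mid \sum a_i^{2s-1} q_i$; but a nonzero multiple of $(y-x^r)$ has $x$-degree at least $r$, so in fact $\sum a_i^{2s-1} q_i = 0$.

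With the $j=1$ term killed, the next step forces $(y-x^r) \mid \sum a_i^{2s-2} q_i^2 = \sum (a_i^{s-1} q_i)^2$; specializing $y = x^r$ and invoking formal reality of $K[x]$ gives $a_i^{s-1} q_i(x, x^r) = 0$, so for each $i$ either $a_i = 0$ or $(y-x^r) \mid q_i$. The latter combined with $\deg_x q_i < r$ forces $q_i = 0$. This dichotomy partitions the indices: the nonzero $a_i$'s absorb the constant $1$, while the indices with $q_i \neq 0$ yield $(y-x^r)^{2s} \sum q_i^{2s}$. Matching with $G = g(y-x^r)^{2s} + 1$ gives $g = \sum_{q_i \neq 0} q_i^{2s}$, a sum of strictly fewer than $k$ terms (since the nonzero $a_i$ camp is nonempty), contradicting the hypothesis that $g$ has $2s$-length $k$.
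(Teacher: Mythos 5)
Your proposal is correct and follows essentially the same strategy as the paper: specialize at $y=x^r$ to force the summands to reduce to constants, then peel off powers of $(y-x^r)$, using formal reality to kill the cross terms and a degree bound to keep the correction polynomials small, finally contradicting minimality of $k$. Your version is, if anything, slightly more streamlined than the paper's: you establish the uniform bound $\deg_x q_i < r$ up front (rather than arguing by contradiction about degrees mid-proof) and you note the length-$(k+1)$ representation explicitly, which the paper leaves implicit.
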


\begin{thm}\label{p_2s=infty}
Let $n>1$ and $s>1$ be positive integers. Then
$p_{2s}(k[x_1,x_2,\dots,x_n])=\infty$
for any formally real field $k$.
\begin{proof}
It is enough to treat the case $n=2$. By the above Proposition, starting from $G(x,y)=1$ we can inductively construct a sequence of polynomials of arbitrarily large length. This finishes the proof.
\end{proof}

\begin{thm}Let $s$ be a positive integer and $n>1$. Then
   \[
p_{2s}(A[x_1,x_2,\dots,x_n])=\infty
    \]
    for any commutative ring in which $-1$ is not a sum of squares.
\end{thm}
    \begin{proof}
One can repeat proof of \cite[Corollary 4.18]{cldr1982} 
verbatim.

\end{proof}

\end{thm}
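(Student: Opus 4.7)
The plan is to reduce the ring case to the field case already settled in \cref{p_2s=infty} (and, for $s=1$, to the original result of \cite{cldr1982}) by passing to a carefully chosen residue field of $A$.

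First I would extract a formally real residue field of $A$. Since $-1 \notin \sum A^2$, a standard Zorn's lemma argument from real algebra (equivalently, nonemptiness of the real spectrum of $A$) produces a prime ideal $\mathfrak{p} \subset A$ such that $K := \operatorname{Frac}(A/\mathfrak{p})$ is formally real. In particular $A$ must have characteristic zero, so the composition $\pi \colon A \twoheadrightarrow A/\mathfrak{p} \hookrightarrow K$ restricts to the canonical inclusion $\mathbb{Z} \hookrightarrow K$ on the prime subring.

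Next I would use the explicit witnesses already constructed in the proof of \cref{p_2s=infty}: set $G_0 := 1 \in \mathbb{Z}[x,y]$ and recursively
\[
G_{k+1}(x,y) := G_k(x,y)\,(y - x^{r_{k+1}})^{2s} + 1,
\]
where each $r_{k+1}$ is chosen with $2 s r_{k+1} > \deg_x G_k$. Iterating \cref{length G} over the formally real field $K$, the $2s$-length of $G_k$ in $K[x,y]$ is exactly $k+1$. Now suppose we have a representation $G_k = \sum_{i=1}^{\ell} g_i^{2s}$ with $g_i \in A[x_1,\dots,x_n]$. Applying the ring homomorphism $A[x_1,\dots,x_n] \to K[x_1,\dots,x_n]$ extending $\pi$ (and fixing the variables), the left-hand side $G_k$ is preserved because $G_k \in \mathbb{Z}[x,y]$, while the right-hand side becomes $\sum_{i=1}^{\ell} \pi(g_i)^{2s}$ in $K[x_1,\dots,x_n]$. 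Hence $\ell \geq k+1$, so the $2s$-length of $G_k$ in $A[x_1,\dots,x_n]$ is unbounded as $k \to \infty$, yielding $p_{2s}(A[x_1,\dots,x_n]) = \infty$.

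The main obstacle is the first step: producing the prime $\mathfrak{p}$ whose residue field is formally real. This is the only place where the ring-theoretic hypothesis "$-1 \notin \sum A^2$" is actually used, and although it is a classical fact in real algebra, it is essential. Once it is granted, the rest is a purely formal descent of the infinitude of lengths from $K$ back to $A$ via the homomorphism $\pi$, which is precisely what makes the proof of \cite[Corollary~4.18]{cldr1982} transfer verbatim to the $2s$-th power setting.
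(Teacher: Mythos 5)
Your proposal is correct and takes essentially the same route as the paper: the paper proves the field case via \cref{length G} and then invokes \cite[Corollary 4.18]{cldr1982} ``verbatim'' for the ring case, and that reference's argument is precisely the descent you describe, namely extracting a real prime $\mathfrak{p}\subset A$ (possible since $\sum A^2$ is a proper preordering when $-1\notin\sum A^2$), observing that $\mathrm{char}\,A=0$ so the integer witnesses $G_k\in\Z[x,y]$ are fixed by the reduction $A\to\mathrm{Frac}(A/\mathfrak{p})$, and concluding that lengths can only drop under this coefficient map. The one small step you leave implicit is the passage from $K[x_1,\dots,x_n]$ back to $K[x,y]$ before applying \cref{length G} (specialize $x_3=\dots=x_n=0$, which cannot increase the $2s$-length); this is the same ``it is enough to treat $n=2$'' reduction the paper uses and is immediate, but should be said.
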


By a similar use of the notion of the homogeneous Pythagoras number as in \cite{cldr1982} we may easily deduce the following
\begin{cor}
Let $R$ be a regular local ring of dimension at least 3 with formally real residue field. Then $p_{2s}(R)=\infty$ for all $s\geq 1$.
\end{cor}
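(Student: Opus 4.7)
The plan follows the homogeneous-Pythagoras-number strategy of \cite{cldr1982}: show that $p_{2s}(R)$ dominates the supremum of the homogeneous Pythagoras numbers of the associated graded ring $\mathrm{gr}_\mathfrak{m}(R)\cong k[t_1,\ldots,t_d]$, and then show separately that this supremum is already infinite once $d\geq 3$.

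For the homogeneous piece, iterate \cref{length G} starting from $G_0=1$ to obtain polynomials $G_k\in k[x,y]$ of $2s$-length exactly $k$, whose degrees $D_k$ are automatically divisible by $2s$ (by induction on the recursion $D_{k+1}=D_k+2sr_k$). Homogenize each $G_k$ with a third variable $z$ to a ternary form $\widetilde{G}_k\in\Sigma_{3,2sd_k}^{2s}$ of degree $2sd_k$. If $\widetilde{G}_k=\sum_{j=1}^M h_j^{2s}$ with $h_j\in k[x,y,z]_{d_k}$, then setting $z=1$ yields a length-$M$ representation of $G_k$, forcing $M\geq k$. Hence $p_{2s}(3,2sd_k)\geq k$, and since a representation in $d\geq 3$ variables specializes via $t_4=\cdots=t_d=0$ to one in three, the same unboundedness transfers to $p_{2s}(d,\cdot)$ for every $d\geq 3$.

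For the reduction, fix a regular system of parameters $u_1,\ldots,u_d$, identifying $\mathrm{gr}_\mathfrak{m}(R)\cong k[t_1,\ldots,t_d]$ via $\bar u_i\mapsto t_i$, and assume for contradiction that $p_{2s}(R)=N<\infty$. Given any form $F=\sum_{i=1}^M G_i^{2s}\in\Sigma_{d,2sd'}^{2s}$, lift each $G_i$ to an element $g_i\in\mathfrak{m}^{d'}$ with initial form $G_i$ (using surjectivity of $\mathfrak{m}^{d'}\twoheadrightarrow\mathfrak{m}^{d'}/\mathfrak{m}^{d'+1}$) and set $f:=\sum_i g_i^{2s}\in R$; then $\ord(f)=2sd'$ and $\initial(f)=F$. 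Applying the hypothesis, write $f=\sum_{j=1}^N h_j^{2s}$. Since $k$ is formally real, so is the polynomial ring $k[t_1,\ldots,t_d]$, and hence a nontrivial sum of $2s$-th powers there cannot vanish. This forces $\min_j\ord(h_j)=d'$ and, after passing to initial forms,
\[
F \;=\; \sum_{j:\,\ord(h_j)=d'}\initial(h_j)^{2s},
\]
a representation of $F$ of length at most $N$ as a sum of $2s$-th powers of degree-$d'$ forms. This bounds the homogeneous $2s$-Pythagoras numbers of $k[t_1,\ldots,t_d]$ by $N$, contradicting the unboundedness from the first step.

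The delicate point is the initial-form step, where one has to rule out cancellation among the $\initial(h_j)^{2s}$ of equal minimal order; this is precisely where formal reality enters and is the only nonroutine ingredient once the homogeneous unboundedness is in hand. The remaining work is routine bookkeeping with the $\mathfrak{m}$-adic filtration and the natural identifications $\mathfrak{m}^e/\mathfrak{m}^{e+1}\cong k[t_1,\ldots,t_d]_e$.
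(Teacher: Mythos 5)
Your proof is correct and is essentially the argument the paper has in mind: the paper gives no details beyond citing the homogeneous-Pythagoras-number technique of \cite{cldr1982}, and your write-up is exactly that technique — pass to $\mathrm{gr}_{\mathfrak m}(R)\cong k[t_1,\dots,t_d]$, use formal reality of $k(t_1,\dots,t_d)$ to rule out cancellation among minimal-order initial forms, and feed in the homogeneous unboundedness for $d\ge 3$ obtained by homogenizing the sequence $G_k$ from \cref{length G}. The only remark worth making is that your homogenization step in part (1) is literally the paper's later corollary ``$\py_{2s}(n,2sd)\to\infty$ for $n\ge 3$,'' which the paper happens to state after this one; you have simply reassembled the pieces in the logically cleaner order.
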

The above corollary allows us to deduce what are the $2s$-Pythagoras numbers for the rings of regular functions on an algebraic set of dimension at least 3.

Let  $X\subset \R^n$ be an algebraic set of dimension at least 3. Denote by $\OO(X)$ the ring of regular functions on $X$ \cite[Definition 3.2.1]{bcr1998}.
\begin{cor}
If $X$ is a real algebraic set of dimension at least 3, then for any positive integer $s\geq 1$ we have
$p_{2s}(\OO(X))=\infty$.
\end{cor}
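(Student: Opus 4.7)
The plan is to reduce to the preceding corollary about regular local rings by localizing at a well-chosen smooth real point of $X$. The argument decomposes into a geometric step (producing a point whose local ring fits the previous corollary) and an algebraic step (showing that localization cannot increase the Pythagoras number).

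For the geometric step, since $X$ is a real algebraic set of dimension at least $3$, standard facts about the smooth locus of real algebraic sets \cite{bcr1998} give a smooth real point $x_0 \in X$ at which $\dim_{x_0} X = \dim X \geq 3$. At such a point, $\OO_{X,x_0}$ is a regular local ring of Krull dimension at least $3$, with residue field $\R$, which is formally real. The preceding corollary applied to $\OO_{X,x_0}$ then gives $p_{2s}(\OO_{X,x_0}) = \infty$.

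For the algebraic step, I would prove the general inequality $p_{2s}(A_\pp) \leq p_{2s}(A)$ for any commutative ring $A$ with unity and any prime ideal $\pp \subset A$. Given $f/g \in A_\pp$ (with $g, b_i \notin \pp$) written as $f/g = \sum_{i=1}^r (a_i/b_i)^{2s}$ in $A_\pp$, clearing denominators yields an identity
\[
f g^{2s-1} B^{2s} = \sum_{i=1}^r \bigl(a_i g \textstyle\prod_{j\neq i} b_j\bigr)^{2s}
\]
in $A_\pp$, with $B = \prod_i b_i$. There exists $t \notin \pp$ witnessing this identity in $A$ after one multiplication by $t$; multiplying further by $t^{2s-1}$ absorbs the extra factor into the $2s$-th powers, producing an identity
\[
t^{2s} f g^{2s-1} B^{2s} = \sum_{i=1}^r \bigl(t a_i g \textstyle\prod_{j\neq i} b_j\bigr)^{2s}
\]
\emph{in $A$ itself}. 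Assuming $p_{2s}(A) = N < \infty$, the left-hand side is then a sum of $N$ $2s$-th powers of elements of $A$, and dividing back by $(tgB)^{2s}$ in $A_\pp$ expresses $f/g$ as a sum of $N$ $2s$-th powers in $A_\pp$. Applying this inequality with $A = \OO(X)$ and $\pp = \mm_{x_0}$ gives
\[
\infty = p_{2s}(\OO_{X,x_0}) \leq p_{2s}(\OO(X)),
\]
as desired.

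The main obstacle is the localization inequality: the bookkeeping with the factor $t$ arising from the defining relation of $A_\pp$ must be handled carefully, and the trick of multiplying by $t^{2s-1}$ to absorb $t$ into the summands relies crucially on working with genuine integer powers. The geometric step is routine. Note that the same argument in fact yields $p_s(A_\pp) \leq p_s(A)$ for any positive integer $s$, so the restriction to even exponents enters only through the previous corollary.
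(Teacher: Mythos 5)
Your proposal is correct and follows exactly the paper's route: localize $\OO(X)$ at a smooth real point where the local dimension is at least $3$, observe that this local ring is regular of dimension $\geq 3$ with formally real residue field, and invoke the preceding corollary together with the inequality $p_{2s}(A_\pp)\le p_{2s}(A)$. The only difference is cosmetic: the paper cites this localization inequality as a ``basic property'' of Pythagoras numbers, whereas you spell out the denominator-clearing argument in full.
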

\begin{proof}
Let $y$ be a regular point of $X$ and $\mathfrak{m}$ be its maximal ideal. Then the basic properties of the Pythagoras number imply that $p_{2s}(\OO(X))\geq p_{2s}(\OO(X)_{\mathfrak{m}})$. However, the ring on the right hand side is a regular local ring of dimension at least 3, hence $p_{2s}(\OO(X)_{\mathfrak{m}})=\infty$ and thus $p_{2s}(\OO(X))=\infty$.
\end{proof}

We might also get an insight into the behaviour of higher Pythagoras numbers of $k$-regulous functions (\cite[Definition 1.1]{baneckik2023}). Denote by $\mathcal{R}^k(\R^n)$ the ring of $k$-regulous functions on $\R^n$ for $k>0$ and $n>2$. Theorem \ref{p_2s=infty} and \cite[Theorem 3.7]{baneckik2023} readily implies

\begin{cor}
For fixed positive integers $s>0$ and $n>2$ we have
 $$ \limsup_{k\rightarrow \infty}p_{2s}(\mathcal{R}^k(\R^n)))=\infty.$$

\end{cor}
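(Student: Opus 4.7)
The plan is to derive the corollary by combining \cref{p_2s=infty}, which gives $p_{2s}(\R[x_1,\dots,x_n])=\infty$ whenever $n\ge 2$ and $s>1$ (the remaining case $s=1$ with $n>2$ being classical), with \cite[Theorem 3.7]{baneckik2023}, whose role is to relate a sum-of-powers representation in $\mathcal{R}^k(\R^n)$ back to a sum-of-powers representation in the polynomial ring once $k$ is taken sufficiently large relative to the polynomial at hand.

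I would argue by contradiction. Suppose $\limsup_{k\to\infty}p_{2s}(\mathcal{R}^k(\R^n))=M<\infty$. Then there exists $k_0$ such that $p_{2s}(\mathcal{R}^k(\R^n))\le M$ for every $k\ge k_0$. Using that $p_{2s}(\R[x_1,\dots,x_n])=\infty$, I would pick a polynomial $f\in\R[x_1,\dots,x_n]$ that is a sum of $2s$-th powers of polynomials but whose polynomial $2s$-length is strictly greater than $M$. Since every polynomial is $k$-regulous for each $k$, the same $f$ is a sum of $2s$-th powers in $\mathcal{R}^k(\R^n)$, and so for every $k\ge k_0$ we obtain a representation
\[
f=\sum_{i=1}^{M}g_{i,k}^{2s},\qquad g_{i,k}\in\mathcal{R}^k(\R^n).
\]

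Next I would invoke \cite[Theorem 3.7]{baneckik2023} applied to the fixed polynomial $f$ and the fixed bound $M$. The content of that theorem is precisely that, once $k$ is taken large enough (depending on $f$ and on $M$), any $k$-regulous representation of $f$ as a sum of at most $M$ $2s$-th powers forces each $g_{i,k}$ to lie in $\R[x_1,\dots,x_n]$. Choosing such a $k\ge k_0$ therefore produces a polynomial representation of $f$ as a sum of at most $M$ $2s$-th powers, contradicting the choice of $f$. This contradiction yields $\limsup_{k\to\infty}p_{2s}(\mathcal{R}^k(\R^n))=\infty$.

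The only real obstacle is checking that \cite[Theorem 3.7]{baneckik2023} indeed applies in the required generality, namely for arbitrary even exponents $2s$ and in the regime $n>2$ needed to ensure the threshold on $k$ depends solely on $f$ (and not on the particular regulous representation). Granted this one citation, the rest is a purely formal transfer argument in the spirit of the preceding corollaries of this section.
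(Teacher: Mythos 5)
Your proof reconstructs exactly the argument the paper leaves implicit: the paper merely asserts that Theorem~\ref{p_2s=infty} and \cite[Theorem 3.7]{baneckik2023} ``readily imply'' the corollary without writing out the transfer step. Your contradiction structure, together with the remarks that the $s=1$ case is classical and that the hypothesis $n>2$ is needed precisely for the cited regulous-to-polynomial rigidity result, supplies the details the paper takes for granted.
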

It is worth to mention that it is still not known whether the above quantities are finite or not.

\begin{cor}
Let $n\ge 3$ and $s\in\N$. Then $\py_{2s}(n,2sd)\to\infty$ as $d\to\infty$.
\end{cor}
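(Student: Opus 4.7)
The plan is to argue by contradiction and transfer a hypothetical uniform bound on $p_{2s}(n,2sd)$ to one on the affine Pythagoras number $p_{2s}(\R[x_1,\dots,x_{n-1}])$ via the standard homogenization trick. Since $n-1\ge 2$, the latter is infinite by \cref{p_2s=infty} when $s\ge 2$; the case $s=1$ is the classical fact that $p_2(\R[x_1,\dots,x_m])=\infty$ for $m\ge 2$. Either way, a contradiction results.

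Suppose, for contradiction, that $p_{2s}(n,2sd)\not\to\infty$ as $d\to\infty$. Then there exist $M\in\N$ and an infinite subset $D\subset\N$ with $p_{2s}(n,2sd)\le M$ for every $d\in D$. Given any $f\in\R[x_1,\dots,x_{n-1}]$ that is a sum of $2s$-th powers, write $f=\sum_{i=1}^r g_i^{2s}$ and choose $d\in D$ with $d\ge\max_i\deg g_i$. Homogenize each $g_i$ to degree $d$ by setting
\[
G_i(x_0,x_1,\dots,x_{n-1}) := x_0^d\, g_i\!\left(\tfrac{x_1}{x_0},\dots,\tfrac{x_{n-1}}{x_0}\right) \in \R[x_0,\dots,x_{n-1}]_d,
\]
so that $F:=\sum_{i=1}^r G_i^{2s}$ is a form of degree $2sd$ in $n$ variables lying in $\Sigma_{n,2sd}^{2s}$. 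By our assumption there is a representation $F=\sum_{k=1}^M H_k^{2s}$ with $H_k\in\R[x_0,\dots,x_{n-1}]_d$. Dehomogenizing by setting $x_0=1$ yields $f=\sum_{k=1}^M H_k(1,x_1,\dots,x_{n-1})^{2s}$, which exhibits $f$ as a sum of at most $M$ $2s$-th powers. Since $f$ was arbitrary, $p_{2s}(\R[x_1,\dots,x_{n-1}])\le M$, contradicting the infinitude cited above.

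There is no real technical obstacle: homogenization and dehomogenization are compatible with taking $2s$-th powers and preserve the number of summands, and the only freedom we need — that the common homogenization degree $d$ may be taken arbitrarily large — is built in by choosing $d\in D$. All the substantive work has already been carried out in the proof of \cref{p_2s=infty}.
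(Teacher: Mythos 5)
Your proof is correct and takes the standard homogenization/dehomogenization route, which is also what the paper's (unwritten) proof of this corollary amounts to. The key observation — that any sum of $2s$-th powers $f\in\R[x_1,\dots,x_{n-1}]$ of degree at most $2sd$ homogenizes to an element of $\Sigma_{n,2sd}^{2s}$ whose representations of length $\ell$ dehomogenize back to length-$\ell$ representations of $f$ — is exactly what transfers a bound on the homogeneous side to the affine side, and since $D$ is an infinite subset of $\N$ the required large $d\in D$ always exists. One minor stylistic point: for $s=1$ you invoke an external ``classical fact'' that $p_2(\R[x_1,\dots,x_m])=\infty$ for $m\ge 2$, but the paper's own unnamed theorem immediately following \cref{p_2s=infty} (which cites \cite[Corollary 4.18]{cldr1982}) already gives $p_{2s}(A[x_1,\dots,x_n])=\infty$ for all $s\ge 1$ and $n>1$ whenever $-1$ is not a sum of squares in $A$; citing that single result would cover both cases uniformly and keep the argument entirely internal to the paper.
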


This proof does not work for binary forms. We do not know if the $4$-Pythagoras number is bounded in this case.

In \cite{bk2023} the first author and Błachut showed that the $2$-Pythagoras number of coordinate rings of a large class of surfaces $\{z^2-f(x,y)=0\} \subset \R^3$ is infinite. Here we are able to generalize this result to the $2s$-Pythagoras numbers.

\begin{dfn}
Let $f(x,y) \in \R[x,y]$ be a non-constant polynomial such that $f(0,0)=0$. Let $d = \deg_y f(x,y)$ and $\alpha x^b y^d$ be a monomial with the largest possible $b$ and nonzero coefficient $\alpha$ among all monomials in $f(x,y)$.
We say that $f(x,y)$ is strictly admissible if:
\begin{itemize}
\item[a)]$\alpha>0$ and $b,d$ are even, or
\item[b)] $b$ or $d$ is odd.
\end{itemize}
We say that $f(x,y)$ is an admissible polynomial if there exists an invertible matrix $M\in \gl_2(\R)$ such that $f\circ M$ is a strictly admissible polynomial.
\end{dfn}
See \cite{bk2023} for examples and further properties of admissible polynomials. In particular, an admissible polynomial has to be positive on an open unbounded set. We may now prove the following.
\begin{thm}
Let $f(x,y)$ be an admissible polynomial which is not a square. Then 
\[
\py_{2s}(\R[x,y,\sqrt{f(x,y)}])=\infty.
\]
\end{thm}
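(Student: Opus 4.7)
The plan is to mimic the argument of Proposition~\ref{length G} and Theorem~\ref{p_2s=infty} inside the overring $A := \R[x,y,\sqrt{f}]$, exploiting that $A$ is free of rank two over $\R[x,y]$ with basis $\{1,\sqrt{f}\}$ and that the hypothesis ``$f$ is not a square'' makes $A$ a formally real integral domain. Concretely, I would establish the following analog of Proposition~\ref{length G}: if $g \in \R[x,y]$ has $2s$-length $k$ in $A$, and $r \in \N$ is chosen so that (i) $\deg_x g < 2sr$, (ii) $\tilde f(x) := f(x, x^r)$ is not a square in $\R[x]$, and (iii) $\tilde f(x) > 0$ on an unbounded half-line, then $G(x,y) := g(y - x^r)^{2s} + 1$ has $2s$-length $k+1$ in $A$. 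Iterating from $g = 1$ then produces elements of arbitrarily large $2s$-length in $A$, as in Theorem~\ref{p_2s=infty}. Conditions (ii) and (iii) are achievable because $f$ is admissible and not a square: admissibility controls the top monomial of $f$ along $y = x^r$ for suitable $r$, while non-squareness excludes only finitely many values of $r$.

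Given a representation $G = \sum_{i=1}^k g_i^{2s}$ in $A$ with $g_i = u_i + v_i\sqrt{f}$, set $A' := A/(y - x^r) \cong \R[x, \sqrt{\tilde f}]$, a formally real integral domain whose real locus contains an unbounded arc by (iii). Reducing modulo $(y - x^r)$ yields $1 = \sum \tilde g_i^{2s}$ in $A'$, where $\tilde g_i := u_i(x, x^r) + v_i(x, x^r)\sqrt{\tilde f}$. Each summand is non-negative on both branches $\pm\sqrt{\tilde f}$ of the real locus, so $|\tilde g_i|\le 1$ on an unbounded real curve; this forces $u_i(x, x^r)$ to be a bounded polynomial, hence a constant $c_i \in \R$, and $v_i(x, x^r)$ to be a polynomial decaying to zero at infinity, hence identically zero. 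Consequently $u_i - c_i$ and $v_i$ are both divisible by $y - x^r$ in $\R[x,y]$, and we write $g_i = c_i + (y - x^r)(h_i + k_i\sqrt{f}) = c_i + (y - x^r) H_i$. A leading-form analysis of the polynomial part $P_i$ of $g_i^{2s}$, using $\sum_i P_i = G$ together with $0 \le P_i \le G$ on the admissible region of $f$, yields the bounds $\deg u_i,\ \deg v_i + \tfrac{1}{2}\deg f < 2r$, and hence $\deg h_i,\deg k_i < r$.

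Finally I would imitate the Newton-expansion step of Proposition~\ref{length G} inside $A$. Expanding $G = \sum (c_i + (y-x^r)H_i)^{2s}$ and matching coefficients of $(y - x^r)^j$ after dividing out one factor of $(y-x^r)$ first gives $\sum c_i^{2s-1} H_i \equiv 0 \pmod{y - x^r}$ in $A$; decomposing along $\{1,\sqrt{f}\}$ and invoking the degree bound forces $\sum c_i^{2s-1} H_i = 0$ identically. Iterating once more produces $\sum c_i^{2s-2}(h_i^2 + f k_i^2) \equiv 0 \pmod{y - x^r}$ in $\R[x,y]$; since this polynomial is non-negative on the admissible region and vanishes on the curve $y = x^r$ (which meets the region in an unbounded arc), it must vanish there to second order, and the degree bound then forces identical vanishing in $\R[x,y]$. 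Being a sum of terms each non-negative on the admissible region, this entails the dichotomy ``$c_i = 0$ or $h_i = k_i = 0$'' for every $i$. Separating indices as in Proposition~\ref{length G} then exhibits $g$ as a sum of strictly fewer than $k$ many $2s$-th powers in $A$, the desired contradiction. The main obstacle, compared with the polynomial-ring proof, is the second paragraph: forcing each $\tilde g_i$ to be a real constant on the unbounded real curve of $A'$, and ruling out cancellations in the leading forms of the polynomial parts $P_i$. Both rest crucially on admissibility of $f$; once these real-algebraic inputs are in place, the algebraic bookkeeping of Proposition~\ref{length G} carries over verbatim.
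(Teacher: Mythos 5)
Your proposal follows essentially the same route as the paper's proof: reduce $A=\R[x,y,\sqrt f]$ modulo $y-x^r$ (the paper phrases this at the level of the component polynomials $f_{i,1},g_{i,1}$ after expanding $g_i=f_{i,1}+\sqrt f\,g_{i,1}$ and killing the $\sqrt f$-part), use that $1=\sum\tilde g_i^{2s}$ with each summand non-negative on an unbounded real arc where $\tilde f>0$ to force the polynomial parts to be constants and the $\sqrt f$-parts to vanish on the curve, and then redo the Newton-expansion/degree argument of Proposition~\ref{length G} to strip off one summand and descend. The paper also does the induction by explicitly building the sequence $F_1=1$, $F_n=F_{n-1}(y-x^{r_{n-1}})^4+1$ with $r_i>4\sum_{k<i}r_k$ rather than stating a clean length-$(k{+}1)$ lemma, and spells out only the case $s=2$, but these are presentational differences.

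One inaccuracy worth flagging: you impose the extra hypothesis (ii) that $\tilde f(x)=f(x,x^r)$ is not a square in $\R[x]$ so that $A'=\R[x,\sqrt{\tilde f}]$ is a domain, and you assert that this ``excludes only finitely many values of $r$.'' That assertion is false in general: for the admissible non-square $f=xy$ one has $f(x,x^r)=x^{r+1}$, which is a square for every odd $r$. The paper never needs this condition, because it works directly with the polynomial identity obtained from the $\{1,\sqrt f\}$-decomposition of $\sum g_i^4$ and only uses positivity of $\tilde f$ on an unbounded arc (your condition (iii)); the domain property of $A'$ plays no role, since the constancy of $u_i(x,x^r)$ and vanishing of $v_i(x,x^r)$ follow from boundedness on the two real branches regardless of whether $t^2-\tilde f$ is irreducible. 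So the argument survives without (ii), but as written your justification for achieving (ii) at every step is incorrect.
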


\begin{proof}
We will carry out a proof for $s=2$, as this is the first non-quadratic case. The proof for higher powers works similarly.

Clearly, for any invertible matrix $M\in GL_2(\R)$, the rings $\R[x,y,\sqrt{(f\circ M)(x,y)}]$ and $\R[x,y,\sqrt{f(x,y)}]$ are isomorphic, hence without loss of generality we may assume that $f(x,y)$ is a strictly admissible polynomial.

We begin the proof by constructing a sequence of polynomials (cf. \cite[Definition 1.10]{bk2023}).

Consider a sequence of positive integers $(r_m)_{m\geq 1}$ such that
\begin{itemize}
\item $r_1>\deg f$ and $r_i> 4\sum_{k=1}^{i-1}r_{k}$
\item $f(x,x^{r_i})$ is a polynomial in $x$, which is either of odd degree, or it has even degree, and the leading coefficient is strictly positive. 
\end{itemize}

We define a sequence of polynomials in the following way:
\begin{itemize}
\item $F_1=1$
\item $F_n=F_{n-1} \cdot (y-x^{r_{n-1}})^4+1$

\end{itemize}
 Fix a sequence of polynomial $(F_m)_{m\geq 1}$ satisfying the above properties. By the Proposition $\ref{length G}$ $4$-length of $F_m$ in $\R[x,y]$ is equal to $m$.

Assume by contrary that the $4$-Pythagoras number of $\R[x,y,\sqrt{f(x,y)}]$ is finite and equal to $L$. Consider the polynomial $F_{L+1}$. By assumption, we have the following 
\begin{equation}\label{eq0}
F_{L+1}=\sum_{i=1}^L (f_{i,1} + \sqrt{f}g_{i,1})^4
\end{equation}
which translates into the equation
\begin{equation}\label{eq1}
F_{L+1}=\sum_{i=1}^Lf_{i,1}^4 +6f\sum_{i=1}^Lf_{i,1}^2g_{i,1}^2+f^2\sum_{i=1}^Lg_{i,1}^4
\end{equation}
as the part that is divisible by $\sqrt{f}$ has to vanish.
Let us now substitute $y=x^{r_L}$. We assumed that $f(x,x^{r_L})$ is either of odd degree or of even degree with positive leading coefficient and does not vanish identically. Also, $f(x,x^{r_L})$ is divisible by $x$. This readily implies that
$f_{i,1}=a_i+(y-x^{r_L})f_{i,2}$ for some $a_i \in \R$ such that $\sum_{i=1}^L a_i^4=1$ and additionally $g_{i,1}=(y-x^{r_L})g_{i,2}$.
We may now rewrite  \cref{eq1} as 
$$F_L(y-x^{r_L})^4+1=\sum_{i=1}^La_i^4+
4(y-x^{r_L})\sum_{i=1}^La_i^3f_{i,2}+
6(y-x^{r_L})^2\sum_{i=1}^La_i^2f_{i,2}^2+
4(y-x^{r_L})^3\sum_{i=1}^La_if_{i,2}^3+$$
$$(y-x^{r_L})^4\sum_{i=1}^Lf_{i,2}^4+6f(y-x^{r_L})^2\sum_{i=1}^L(a_i+(y-x^{r_L})f_{i,2})^2g_{i,2}^2+f^2(y-x^{r_L})^4\sum_{i=1}^Lg_{i,2}^4.$$
After cancelling 1's and dividing by $(y-x^{r_L})$ we see that the degree considerations forces $\sum_{i=1}^La_i^3f_{i,2}=0$.
Again, by dividing by  $(y-x^{r_L})$ we get that
$$(y-x^{r_L}) | \sum_{i=1}^La_i^2f_{i,2}^2+6f\sum_{i=1}^La_i^2g_{i,2}^2.$$

If the polynomial on the right is nonzero, then necessarily $\sum_{i=1}^Lf_{i,2}^4$ or $f^2\sum_{i=1}^Lg_{i,2}^4$ have to be of degree at least $2r_L$. Furthermore, the right-hand-side of \cref{eq1} will be of degree at least $6r_L$, while the left-hand side is of degree $4\sum_{i=1}^L r_i$. The contradiction follows from the assumption on the $r_i's$.
As a result $ \sum_{i=1}^La_i^2f_{i,2}^2+6f\sum_{i=1}^La_i^2g_{i,2}^2$ is the zero polynomial. Even more, we know that for each $i=1,2,\dots, L$ $a_if_{i,2}= 0$ and $a_ig_{i,2}= 0$, because if not, then $f$ would attain only nonpositive values, but this is a contradiction with the assumption of admissibility.
Furthermore, there exists at least one $i$ such that $a_i\neq0$ and for such, we have $f_{i,2}=g_{i,2}=0.$ By merging all of the nonzero $a_i's$ into one, renumbering and adding artificial zero polynomials if necessary, we can rewrite \cref{eq0} as
$$F_L(y-x^{r_L})^4+1=(y-x^{r_L})^4\sum_{i=1}^{L-1}(f_{i,2} + \sqrt{f}g_{i,2})^4 +1$$ 
which is equivalent to 
$$F_L=\sum_{i=1}^{L-1} (f_{i,2} + \sqrt{f}g_{i,2})^4.$$
In this process, we managed to reduce the number of summands in \cref{eq0} as well as reduce the polynomial $F_{L+1}$ to $F_L$.

After repeating the above procedure $L-2$ times we get the following
$$F_2=(y-x^{r_1})^4+1=f_{1,L}^4+ff_{1,L}^2g_{1,L}^2+f^2g_{1,L}^4.$$
After repeating the above reasoning one last time, we get that $f_{1,L}=1$ and $g_{1,L}=0$. This implies an equality of polynomials of the type $(y-x^{r_1})^4=0$ which is absurd, hence the proof is done.
\end{proof}

\section{Binary octics}
\label{sec:Binary forms}
For the rest of the paper we will work with binary forms. 
We do not know if the higher homogenous Pythagoras numbers increase if the degree tends to infinity. This is of course not the case for the 2-Pythagoras number as any positive semidefinite binary form can be written as a sum of at most two squares. 
The cone $\Sigma_{2,4}^4$ of binary quartics that are sums of fourth powers of linear forms is well understood by \cite{reznick1992}. We therefore consider the case of octics and prove that the $4$-Pythagoras number is either 3 or 4 in this case (\cref{thm:bound_py4}). Moreover, we show \cref{thm:algebraic_boundary} which states that the algebraic boundary of the cone $\bin$ is irreducible.

Throughout the next two sections length always means $4$-length.
We will often use the sum of $4$-th powers map
    \[
    \phi_d^k\colon \Rx_d^k\to \Rx_{4d},\quad (p_1,\dots,p_k)\mapsto \sum_{i=1}^k p_i^4
    \]
    and its differential at the point $\ul p=(p_1,\dots,p_k)$
    \[
    \dphi_d^k(\ul p)\colon \Rx_d^k\to \Rx_{4d},\quad (q_1,\dots,q_k)\mapsto \sum_{i=1}^k p_i^3 q_i.
    \]
The image of the differential is given by $\id{p_1^{3},\dots,p_k^{3}}_{4d}$.

Moreover, for $f_1,\dots,f_k\in\C[x_1,\dots,x_n]_d$ we denote by $\vc(f_1,\dots,f_k)\subset \P^{n-1}=\P^{n-1}(\C)$ the zero set of the forms $f_1,\dots,f_r$, i.e. the set of all points $\xi\in\P^{n-1}$ such that $f_i(\xi)=0$ for all $i=1,\dots,k$.

The following simple proposition shows that one could hope for a constant 4-Pythagoras number of binary forms and possibly prove $p_4(2,4d)= 4$. However, this set being full-dimensional is a very small indication. We give a direct, slightly combinatorial argument here. 
One may also use powers of linear forms instead of monomials and use known cases of Fr\"oberg's conjecture (see \cite[Lemma 2.2, Theorem 2.3]{lors2019}).

\begin{prop}
Let $n=2$. For every $d\ge 1$ the \sa\ set of all forms in $\Sigma^4_{2,4d}$ of length at most 4 is a full-dimensional \sa\ set.
\end{prop}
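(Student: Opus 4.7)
The plan is to reduce the full-dimensionality assertion to a rank condition on the differential of $\phi_d^4$. The set of forms of length at most $4$ in $\Sigma^4_{2,4d}$ is precisely the image of the polynomial map $\phi_d^4\colon(\R[x,y]_d)^4\to\R[x,y]_{4d}$, hence is automatically semi-algebraic by Tarski--Seidenberg. So it suffices to exhibit a point $\ul{p}^\ast=(p_1^\ast,\dots,p_4^\ast)\in(\R[x,y]_d)^4$ at which $d\phi_d^4(\ul{p}^\ast)$ is surjective onto $\R[x,y]_{4d}$; the implicit function theorem then provides a Euclidean open neighborhood of $\phi_d^4(\ul{p}^\ast)$ inside the image, establishing full-dimensionality.

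By the formula recalled at the start of the section, the image of $d\phi_d^4(\ul{p}^\ast)$ equals the degree-$4d$ component of the ideal $\langle(p_1^\ast)^3,\dots,(p_4^\ast)^3\rangle\subseteq\R[x,y]$. Producing a suitable $\ul p^\ast$ therefore amounts to finding four forms of degree $d$ whose cubes generate all of $\R[x,y]_{4d}$ as an ideal. I would take $p_i^\ast:=\ell_i^d$ for four sufficiently generic linear forms $\ell_1,\dots,\ell_4\in\R[x,y]_1$, so that the cubes become $\ell_i^{3d}$, and the task reduces to verifying
\[
\langle\ell_1^{3d},\ell_2^{3d},\ell_3^{3d},\ell_4^{3d}\rangle_{4d}=\R[x,y]_{4d}.
\]

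This is a Hilbert-function statement about an ideal generated by powers of generic linear forms in two variables, and is covered by the known cases of Fr\"oberg's conjecture referenced in Lemma~2.2 and Theorem~2.3 of \cite{lors2019}. Specifically, Fr\"oberg's bound identifies the Hilbert series of $\R[x,y]/I$ with the positive truncation of $(1-t^{3d})^4/(1-t)^2$, and a direct computation shows that the coefficient of $t^{4d}$ in this rational function equals $(4d+1)-4(d+1)=-3$, so the truncated series already vanishes by degree $4d$; equivalently $I_{4d}=\R[x,y]_{4d}$, as needed.

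The main obstacle is that the forms $\ell_i^{3d}$ are highly non-generic among degree-$3d$ binary forms — they are perfect $3d$-th powers of linear forms — so one cannot directly appeal to Fr\"oberg's conjecture for generic degree-$3d$ forms. What is required instead is the binary case of Fr\"oberg's conjecture specifically for ideals generated by \emph{powers of linear forms}, which is known and is exactly the content supplied by the cited results of \cite{lors2019}.
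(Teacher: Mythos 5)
Your proof is correct, and it takes the alternative route that the paper itself flags in a one-sentence aside immediately before its proof (``One may also use powers of linear forms instead of monomials and use known cases of Fr\"oberg's conjecture (see [Lemma~2.2, Theorem~2.3]{lors2019})'') but does not carry out. The reduction to surjectivity of $d\phi_d^4$ at one point, the identification of its image with $\langle p_1^3,\dots,p_4^3\rangle_{4d}$, and the Hilbert-series count for $(1-t^{3d})^4/(1-t)^2$ giving coefficient $(4d+1)-4(d+1)=-3<0$ at $t^{4d}$ are all sound. The key point you correctly isolate is that one cannot invoke Fr\"oberg for generic degree-$3d$ forms directly, since $\ell_i^{3d}$ are extremely non-generic; what is needed is the binary case of the ideal-generated-by-powers-of-generic-linear-forms version, which is exactly what \cite{lors2019} supplies. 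The paper instead gives a self-contained, elementary argument: it picks explicit $p_1=x^d$, $p_2=y^d$ and two further $p_3,p_4$ (monomials, or a monomial plus $x^d+y^d$ in the case $d\equiv 1\pmod 3$) and verifies by hand, monomial by monomial, that $\langle p_1^3,\dots,p_4^3\rangle_{4d}=\R[x,y]_{4d}$. Your argument is shorter and conceptually cleaner but outsources the hard step to a nontrivial external theorem; the paper's argument is longer and slightly fiddly (three cases according to $d\bmod 3$) but needs nothing beyond elementary manipulations.

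One stylistic remark: what you call the implicit function theorem is really the submersion (open mapping) theorem---a smooth map with surjective differential at a point is open near that point---but this is cosmetic and does not affect correctness.
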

\begin{proof}
It suffices to show that the differential $\dphi^4_d(\ul p)$ is surjective for one specific (and therefore a generic) choice of $\ul p\in\Rx^4_d$.
Let $r\in\{0,1,2\}$ such that $d\equiv r\mod 3$.
Choose $p_1=x^d, p_2=y^d$ and $p_3,p_4$ depending on $r$. In every case we show that every monomial in $\Rx_{4d}$ is contained in $I_{4d}=\id{p_1^3,\dots,p_4^3}_{4d}$ which is the component of degree $4d$ of the ideal $I=\id{p_1^3,\dots,p_4^3}$.

In every case, since $x^{3d},y^{3d}\in I$ we see that for $k\le d$ and $k\ge 3d$ the monomial $x^ky^{4d-k}$ is contained in $I$.

\ul{$r=0\,$}: $p_3=(x^2y)^\frac{d}{3},\, p_4=(xy^2)^\frac{d}{3}$

We have $p_3^3=x^{2d}y^d,\, p_4^3=x^dy^{2d}$ which implies that $x^ky^{4d-k}\in I$ for $2d\le k\le 3d$ and $d\le k\le 2d$. Hence $I_{4d}=\Rx_{4d}$.

\ul{$r=2\,$}: $p_3=(x^2y)^\frac{d-2}{3}xy,\, p_4=(xy^2)^\frac{d-2}{3}xy$

We have $p_3^3=x^{2d-1}y^{d+1},\, p_4^3=x^{d+1}y^{2d-1}$ which shows $x^ky^{4d-k}\in I$ for $2d-1\le k\le 3d-1$ and $d+1\le k\le 2d+1$. Hence $I_{4d}=\Rx_{4d}$.

\ul{$r=1\,$}: In this case monomials do not suffice. Consider $p_3=x^\frac{d+2}{3}y^\frac{2d-2}{3},\, p_4=x^d+y^d$.

Since $p_3^3=x^{d+2}y^{2d-2}$, the monomial $x^ky^{4d-k}$ is in $I$ if $d+2\le k\le 2d+2$. We are therefore missing $k=d+1$ and $2d+3\le k\le 3d-1$.
We have
\[
p_3^3=x^{3d}+y^{3d}+3x^{2d}y^d+3x^dy^{2d}\equiv x^{2d}y^d+x^dy^{2d} \mod I_{3d}
\]
and therefore $(x^{2d}y^d+x^dy^{2d})xy^{d-1}=x^{2d+1}y^{2d-1}+x^{d+1}y^{3d-1}\equiv x^{d+1}y^{3d-1} \mod I_{4d}$. Lastly for every $0\le s\le d$ we see
\[
(x^{2d}y^d+x^dy^{2d})x^sy^{d-s}=x^{2d+s}y^{2d-s}+x^{d+s}y^{3d-s}\equiv x^{2d+s}y^{2d-s} \mod I_{4d}
\]
since $d+s\in\{d,\dots,2d\}$.
\end{proof}

We study the cone of binary octics that are sums of fourth powers of quadratic forms. Via the Veronese map and duality this is equivalent to studying a certain intersection of the psd/sos cone of ternary quartics. It can also be seen as the cone of sums of fourth powers of linear forms in $\R[a,b,c]/(b^2-ac)$, the coordinate ring of the second Veronese of $\P^1$.

We show that the algebraic boundary $\partial_a\bin$ (i.e. the Zariski closure of the boundary of $\bin$) is an irreducible hypersurface and consists of binary octics of length at most 3. Using this we deduce that every binary octic that is a sum of fourth powers of quadratic forms can be written as a sum of at most four such forms.\\

The first goal is to show that any element on the boundary of $\bin$ has 4-length at most 3 (\cref{cor:boundary_length}).
\subsection{The dual cone}
For any $n\in\N$ consider the \todfn{apolarity pairing} on $\R[x_1,\dots,x_n]$ and on $\C\otimes \R[x_1,\dots,x_n]$. Given two monomials $\ul x^\alpha,\, \ul x^\beta$, $\alpha,\beta\in\Z_{\ge 0}^n$ we define
\[
\bil{\ul x^\alpha}{\ul x^\beta}=\begin{cases}\frac{\beta!}{\alpha!}\ul x^{\beta-\alpha}\quad &,\text{if } \alpha\le \beta\\ 0\quad &,\text{else} \end{cases}
\]
where $\alpha!=\prod_{i=1}^n (\alpha_i!)$.
This is the same as having the first element act on the second via differentiation. For any $d\in\N$ this gives a scalar product on $\R[x_1,\dots,x_n]_d$. Over the complex numbers this is not true, but it is still a perfect pairing.

Consider the dual cone 
\[
(\bin)^\star=\{L\in\R[x,y]_8^\du\colon L(q^4)\ge 0\, \all q\in\R[x,y]_2\}.
\]
Let $q(a,b,c)=ax^2+bxy+cy^2$ with new variables $a,b,c$ and let $L=\sum_{i=0}^8 a_i x^iy^{8-i}\in\R[x,y]_8$. By identifying $\R[x,y]_8^\du$ and $\R[x,y]_8$ via the apolarity pairing we  get
\begin{equation}
\label{eqn:eval}
\bil{L}{q(a,b,c)^4}=\sum_{i=0}^8 a_i \bil{x^iy^{8-i}}{x^iy^{8-i}} g_i(a,b,c)
\end{equation}
for some forms $g_i\in\R[a,b,c]_4$. Hence, $L\in(\bin)^\star \iff \bil{L}{q(a,b,c)^4}\in\ter\subset\R[a,b,c]_4$. It was proven by Hilbert that the cone of positive semidefinite ternary quartics is equal to the sum of squares cone. 
This shows that the dual cone $(\bin)^\star$ is a linear slice of the sos cone $\ter$ of ternary quartics (i.e. it is an intersection of $\ter$ with a linear subspace). From \cref{eqn:eval} we see that the coefficients of the ternary quartic $\bil{L}{q(a,b,c)^4}$ are not linearly independent but that the ternary quartics are contained in a 9-dimensional subspace.

For $F\in\R[a,b,c]_4$ write $F=\sum_{\alpha\in\Z^3} c_\alpha \ul a^\alpha$, where $\ul a = (a,b,c)$. We calculate that the subspace $U$ of $\R[a,b,c]_4$ spanned by ternary quartics of the above form is given by the vanishing of the six linear forms
\begin{align*}
\frac{2}{3}c_{220}-c_{301},\, 3c_{130}-c_{211},\, \frac{2}{3}c_{022}-c_{103},\, 3c_{031}-c_{112},\, 12c_{040}-c_{121},\, 6c_{040}-c_{202}.
\end{align*}
Equivalently, it is the orthogonal complement of $(b^2-ac)_4$ in $\R[a,b,c]_4$ with respect to apolarity.
Understanding $(\bin)^\star$ is the same as studying $\ter\cap U$. The facial structure of $\ter$ has been studied by Kunert in \cite{kunert2014} and is rather explicitly known.

Let $q_1,\dots,q_s\in \R[x,y]_2$ and assume $f=\sum_{i=1}^s q_i^4$ is on the boundary of $\bin$. Let $L\in(\bin)^\star$ be a linear functional in $\R[x,y]_8^\du$ vanishing at $f$, then $L(f)=0$ and thus $L(q_i^4)=0$ ($i=1,\dots,s$) which shows that the points in $\P^2(\R)=\P(\R[x,y]_2)$ corresponding to $q_1,\dots,q_s$ are zeros of the psd ternary quartic $F=\bil{L}{q(a,b,c)^4}$.


The $\gl_2(\R)$ action on $\R[x,y]$ gives rise to an embedding of $\gl_2(\R)$ into $\gl_3(\R[a,b,c]_1)$ which allows some automorphisms on $\R[a,b,c]_4$ that fix the subspace $U$ as well as $\ter$.

Since we are dealing with real zeros of psd elements understanding the behaviour of derivatives is essential. Let $L\in\R[x,y]_8$ and $u=(u_1,u_2,u_3)\in\R^3$. Via a direct calculation we see
\[
\partial_u\bil{L}{q(a,b,c)^4}=4\bil{L}{q(a,b,c)^3(u_1x^2+u_2xy+u_3y^2)}
\]
where the partial derivatives are taken with respect to the variables $a,b,c$.

A different way to arrive at the same point is as follows. Let $v_2\colon\P^1\to\P^2$ be the second Veronese map. With coordinates $a,b,c$ on $\P^2$ the image is given as the vanishing set of $b^2-ac$. This gives rise to an isomorphism $\R[x,y]_{2d}\cong\R[a,b,c]/(b^2-ac)_d$ for any $d\ge 1$. Moreover, under this isomorphism the cones $\bin$ and $\Sigma^4\R[a,b,c]/(b^2-ac)_1$ are isomorphic. Here the second cone consists of elements that are sums of fourth powers of linear forms. Dualizing the second cone we get
\[
(\Sigma^4\R[a,b,c]/(b^2-ac)_4)^\star\cong \ter\cap (b^2-ac)_4^\perp.
\]
The fact that the cone of sums of squares $\ter$ and the cone $P_{3,4}$ of psd ternary quartics coincide will be essential for certain calculations as testing for non-negativity is very hard.

Let $L\in\partial(\bin)^\star$, then $F:=\bil{L}{q(a,b,c)^4}$ is a psd ternary quartic in $\R[a,b,c]_4$ which therefore is also a sum of squares. Since $L$ is on the boundary there exists $g\in\R[x,y]_2$ such that $L(g^4)=0$. Thus $F$ has a real zero and lies on the boundary of the sos cone $\ter$.

We show (Proposition \ref{prop:infinite_zeros} and Theorem \ref{thm:at_most_3_zeros}) that there are two cases. Either
\begin{enumerate}
\item $|\vc(F)(\R)|\le 3$ or
\item $|\vc(F)(\R)|=\infty$, $(L^\perp)_3\neq \{0\}$ and $F=l^4$ for some $l\in\R[a,b,c]_1$.
\end{enumerate}
We use the characterization of faces of $\ter$ from Kunert \cite{kunert2014}. In this thesis all faces of the sums of squares cone $\ter$ are characterized in terms of the types of singularities of its relative interior points.
In particular, a quartic $F\in\ter$ has either infinitely many real points or at most 4. 
Our main technical result is to prove that there is no quartic in $\ter\cap U$ that has exactly four real zeros.

For the rest of the paper we will slightly abuse notation and will see an element of $(\bin)^\star$ as either an element of $\R[x,y]_8$ or a ternary quartic in $\R[a,b,c]_4$. In general, we use $f$ for binary octics, $F$ for ternary quartics and $L$ for linear functionals in $(\R[x,y]_8)^\du$. As we have seen the latter two are both dual elements to $\R[x,y]_8$, however both points of view have their separate advantages.

\begin{rem}
Our results imply the following.
    There are the following types of ternary quartics $F$ on the boundary of the dual cone $(\bin)^\star$.
    \begin{enumerate}
        \item $F=l^4$ with $l\in\R[a,b,c]_1$.
        \item $\vc(F)(\R)=\{(\xi_1\colon \xi_2\colon \xi_3)\}$ with $\xi_2^2-4\xi_1\xi_3>0$, i.e. the associated binary quadratic form is indefinite. And for every such point $\xi$ there exists $F$ whose only real zero is $\xi$.
        \item $|\vc(F)(\R)|=2,3$ and all points correspond to indefinite binary quadratics. In this case not every pair and triple of points is possible.
    \end{enumerate}
\end{rem}

\subsection{Main results}

For $L\in\R[x,y]_8$ denote by $L^\perp$ the apolar ideal of $L$, i.e. the ideal generated by all forms $g\in\R[x,y]$ such that $\bil{g}{L}=0$. Note that $\R[x,y]/L^\perp$ is an artinian Gorenstein ring with socle in degree 8. Moreover, since we are working with binary forms the ideal $L^\perp$ is also a complete intersection with the degrees of the two generators summing up to 10. Considering this Gorenstein ideal already proved to be a main ingredient in studying sum of squares cones, for example in \cite{blekherman2013}.

\begin{prop}
\label{prop:infinite_zeros}
Let $L\in(\bin)^\star$ and $F=\bil{L}{q(a,b,c)^4}$.
If $|\vc(F)(\R)|=\infty$, then $l'^3\in (L^\perp)_3$ for some $l'\in\R[x,y]_1$ and every element in $(L^\perp)_6$ is divisible by $l'^2$.
Moreover, there exists $l\in\R[a,b,c]_1$  such that $F=l^4$.
\end{prop}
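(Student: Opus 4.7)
My plan is to first pin down $F$ via the constraint $F\in U$ and then recover $L$ and its apolar ideal. Throughout I use that the assignment $L\mapsto F=\bil{L}{q(a,b,c)^4}$ is a linear isomorphism $\R[x,y]_8\to U$ (both 9-dimensional) and that $U=(b^2-ac)_4^\perp$ is cut out inside $\R[a,b,c]_4$ by the single differential equation $F_{bb}=F_{ac}$. A real zero $\xi\in\P^2(\R)$ of $F$ makes $q(\xi)^4$ a minimizer of $q\mapsto L(q^4)$, and the first-order condition $L(q(\xi)^3 r)=0$ for all $r\in\R[x,y]_2$ gives $q(\xi)^3\in (L^\perp)_6$. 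So the hypothesis $|\vc(F)(\R)|=\infty$ produces a real one-parameter family of binary quadratics whose cubes lie in $L^\perp$.

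The crux is to show $F=l^4$. A psd quartic whose real zero set has dimension at least one must have an irreducible real factor $h$ of degree $\le 2$, and $F\ge 0$ forces $h^2\mid F$ (via a local sign analysis near a smooth real point of $\vc(h)(\R)$). Thus either $F=l_0^2 g$ with $l_0\in\R[a,b,c]_1$ and $g\in\R[a,b,c]_2$ psd, or $F=F_0^2$ with $F_0$ an irreducible quadratic. Substituting $F=l_0^2 g$ into $F_{bb}=F_{ac}$ gives
\[
2\mu_0 g+2l_0\bigl(2b_0 g_b-a_0 g_c-c_0 g_a\bigr)+l_0^2(g_{bb}-g_{ac})=0,
\]
where $l_0=a_0 a+b_0 b+c_0 c$ and $\mu_0=b_0^2-a_0 c_0$. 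The $\gl_2(\R)$-action on $\R[a,b,c]_1$ induced by the Veronese embedding preserves $U$ and the Veronese conic $b^2=ac$, so I reduce $l_0$ to one of the three nonzero orbit representatives $a$, $b$, $a+c$. Short coefficient computations show that for $l_0\in\{b,a+c\}$ the equation forces $g=0$, while for $l_0=a$ the constraints combined with psd-ness of $g$ force $g=\lambda a^2$ with $\lambda\ge 0$. For $F=F_0^2$, the condition $F\in U$ reduces to $(\partial_b F_0)^2-\partial_a F_0\,\partial_c F_0$ being proportional to $F_0$, and a similar computation forces $F_0$ itself to be a square of a linear form, contradicting irreducibility. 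In every case $F=l^4$ with $l$ on the Veronese conic.

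Finally, a direct apolarity identity gives $\bil{(sx+ty)^8}{h}=h(s,t)$ for every $h\in\R[x,y]_8$, so $\bil{(sx+ty)^8}{q^4}=q(s,t)^4=(s^2 a+stb+t^2 c)^4$. Injectivity of $L\mapsto F$ forces $L=l_0^8$ with $l_0=sx+ty$, and the apolar ideal of a pure eighth power in two variables is the complete intersection $L^\perp=(l',l_0^9)$, where $l'\in\R[x,y]_1$ is any linear form apolar to $l_0$ (for instance $l'=tx-sy$). Thus $l'^3\in (L^\perp)_3$, and every cube of a binary quadratic lying in $(L^\perp)_6=l'\cdot\R[x,y]_5$ is of the form $(l'm)^3=l'^3 m^3$, which is divisible by $l'^2$. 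The main obstacle is the middle paragraph: one must carefully combine the differential identity $F_{bb}=F_{ac}$ with the psd constraints on $g$ and on $F_0$ throughout all orbit representatives in order to exclude every configuration other than $F=l^4$.
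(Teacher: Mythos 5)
Your route to the core conclusion $F=l^4$ is genuinely different from the paper's. The paper invokes Kunert's classification of singular psd ternary quartics ($F=p^2$ with $p$ indefinite irreducible or $F=l^2u$), then works inside the apolar ideal $L^\perp$ (regular sequence dichotomy for $V$, degrees of a complete intersection), and finishes with a Gram-matrix computation of the psd elements of $U\cap I((1:0:0))_2^2$. You instead re-derive the factorization $F=l_0^2g$ or $F=F_0^2$ directly from positivity of $F$, impose the single linear constraint $F_{bb}=F_{ac}$ defining $U$, and solve it coefficient-by-coefficient after reducing $l_0$ to the three $\gl_2(\R)$-orbit representatives (tangent, secant, external). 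This is more self-contained and avoids citing Kunert; I checked the three orbit cases and they come out as you claim ($l_0=b$ and $l_0=a+c$ force $g=0$, $l_0=a$ together with psd-ness of $g$ forces $g=\lambda a^2$), so the argument for $F=l^4$ is sound, even though you assert rather than display the two coefficient computations and the reducibility argument for $F_0$ (for the latter, ``$F_0$ is a square of a linear form'' is not quite the right conclusion — the paper's computer check only yields that $F_0$ is reducible, which already contradicts irreducibility). The step identifying $L$ as a pure eighth power from $F=l^4$ via injectivity of $L\mapsto\bil{L}{q(a,b,c)^4}$ on the $9$-dimensional source is also correct.

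Where your proof genuinely diverges from the stated proposition is the last claim. The proposition asserts that \emph{every} element of $(L^\perp)_6$ is divisible by $l'^2$; you only establish this for cubes of binary quadratics lying in $(L^\perp)_6$. This is not a cosmetic weakening: once you have shown $L$ is a scalar multiple of $m^8$ with $m\in\R[x,y]_1$, the apolar ideal is $L^\perp=(l',m^9)$ with $\deg l'=1$, so $(L^\perp)_6=l'\cdot\R[x,y]_5$, which visibly contains elements such as $l'x^5$ that are not divisible by $l'^2$. So the blanket statement in the proposition fails in exactly the case your analysis lands you in, and cannot be proved. (The paper's own justification — ``one generator of degree $\le 3$, the other of degree $\ge 7$, hence every degree-$6$ form is divisible by $l'^2$'' — silently assumes the low-degree generator has degree $\ge 2$, which is false when $L$ is a pure power.) Your cube-only version is the statement that is both true and actually used downstream (e.g., in Lemma \ref{lem:pd_impossible} and Theorem \ref{thm:faces}, one only ever applies the divisibility to $q(\xi)^3$). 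So: your proof does not deliver the proposition verbatim, but the discrepancy reflects an overstatement in the proposition rather than a flaw in your argument; it would be worth flagging explicitly that you are proving, and can only prove, the restricted version.
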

\begin{proof}
By \cite[§2.3.]{kunert2014} either $F=p^2$ for some indefinite, irreducible $p\in\R[a,b,c]_2$ or $F=l^2u$ for some $u\in\R[a,b,c]_2, l\in\R[a,b,c]_1$. Since $F$ is singular at any real zero $u=(u_0:u_1:u_2)\in\P^2(\R)$ we see 
\[
\bil{L}{q(u_0,u_1,u_2)^3h}=0\quad \forall h\in \R[x,y]_2.
\]
If $F=l^2u$, there exists a 2-dimensional subspace $V\subset\R[x,y]_2$ such that for every $q\in V$, $q^3\in L^\perp$, $V$ is given as the orthogonal of $\spn(l)\subset\R[a,b,c]_1\cong \R[x,y]_2$. Since $\spn(q^3\colon q\in V)=V^3$, we see $V^3\R[x,y]_2\subset L^\perp$.

If $V$ is spanned by a regular sequence, the Hilbert function of the ideal generated by $V$ is $(1,2,1)$. Since 
\[V^3\R[x,y]_2=V^2(V\R[x,y]_2)=V^2(\R[x,y]_4)=V(V\R[x,y]_4)=V\R[x,y]_6=\R[x,y]_8
\]
and $\bil{\cdot}{L}$ vanishes on it we have a contradiction.
Therefore $V$ is spanned by $l'l_1,l'l_2$ for some linear forms $l',l_1,l_2\in\R[x,y]_1$. Then $V=l'\R[x,y]_1$ and $V^3\R[x,y]_2=l'^3\R[x,y]_5$, hence $l'^3$ is in the apolar ideal of $L$.
Additionally, since the apolar ideal is a complete intersection and there is one generator in degree at most 3, the other generator has degree at least 7 (see \cite[Thm 1.44.]{ika1999}). In particular, every form in degree 6 is divisible by $l'^2$. Thus every real zero of $F$ is a zero of $l$. Moreover, $l'^8\in L^\perp$.

Therefore $\bil{L}{l'^8}=0$. We claim that this already implies $F=l^4$. After a \cc\ we can assume $l'=x$ and therefore $F$ vanishes at $(1:0:0)$.
We calculate
\[
U\cap I((1:0:0))_2^2=\spn(a b^{3} + 3 a^{2} b c, b^{2} c^{2} + \frac{2}{3} a c^{3}, b^{3} c + 3 a b c^{2}, b^{4} + 12 a b^{2} c + 6 a^{2} c^{2}, c^{4}, b c^{3}).
\]
The Gram matrices of an element 
\[
a b^{3} a_{1} + 3 a^{2} b c a_{1} + b^{2} c^{2} a_{2} + \frac{2}{3} a c^{3} a_{2} + b^{3} c a_{3} + 3 a b c^{2} a_{3} + b^{4} a_{4} + 12 a b^{2} c a_{4} + 6 a^{2} c^{2} a_{4} + c^{4} a_{5} + b c^{3} a_{6}
\]
in the intersection have the form
\[
\left(\begin{array}{rrrrr}
0 & 9 a_{1} & 3 a_{1} & 36 a_{4} - \lambda_{1} & 9 a_{3} - \lambda_{2} \\
9 a_{1} & 36 a_{4} & \lambda_{1} & \lambda_{2} & 2 a_{2} \\
3 a_{1} & \lambda_{1} & 6 a_{4} & 3 a_{3} & \lambda_{3} \\
36 a_{4} - \lambda_{1} & \lambda_{2} & 3 a_{3} & 6 a_{2} - 2 \lambda_{3} & 3 a_{6} \\
9 a_{3} - \lambda_{2} & 2 a_{2} & \lambda_{3} & 3 a_{6} & 6 a_{5}
\end{array}\right)
\]
for some $\lambda_1,\lambda_2,\lambda_3\in\R$ with respect to the basis $ab, ac, b^2, bc, c^2$.
By checking all principle $2\times 2$ minors, it is easy to see that the only form in the intersection having a psd Gram matrix is $a_5c^4$ with $a_5>0$.

Lastly, assume $F=p^2$ with $p$ indefinite, irreducible. Since $F\in U$ the quadratic form $\bil{F}{b^2-ac}\in\R[a,b,c]_2$ has to vanish identically. This gives six equations. One then easily checks on a computer that the determinant of the quadratic form $p$ is contained in the radical of the ideal generated by the six forms above, i.e. $p$ is reducible, a contradiction.
\end{proof}

We now prepare to prove $|\vc(F)(\R)|\le 3$ if $\vc(F)(\R)$ is finite. Recall that we already know from \cite{kunert2014} that in this case $|\vc(F)(\R)|\le 4$.

\begin{lem}
\label{lem:not_ci}
Let $q_1,q_2\in \P(\R[x,y]_2)$ be linearly independent with a common factor and $q_1^3,q_2^3\in L^\perp$. Then $l^3\in(L^\perp)_3$ for some $l\in\R[x,y]_1$.
\end{lem}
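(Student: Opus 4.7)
The plan is to contract $L$ by $l^3$ and reduce to a Hilbert-function computation on the complete intersection generated by cubes of two linearly independent linear forms.

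First I would write $q_i = l\cdot m_i$ for $i=1,2$, where $l\in\R[x,y]_1$ is the common linear factor and $m_i\in\R[x,y]_1$. Linear independence of $q_1,q_2$ immediately gives linear independence of $m_1,m_2$. Using the action property of the apolarity pairing, $\bil{gh}{L} = \bil{g}{\bil{h}{L}}$, I would set $L' := \bil{l^3}{L} \in \R[x,y]_5$. Then the hypothesis $q_i^3 = l^3 m_i^3 \in L^\perp$ translates directly to $\bil{m_i^3}{L'}=0$, i.e.\ $m_i^3 \in (L')^\perp$ for $i=1,2$. The desired conclusion $l^3 \in (L^\perp)_3$ is equivalent to $L' = 0$, so this is what remains to be shown.

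For the Hilbert-function step, after a linear change of coordinates I may assume $m_1=x$ and $m_2=y$, so that $(m_1^3, m_2^3) = (x^3,y^3)$ is a complete intersection whose quotient has Hilbert function $(1,2,3,2,1,0,\ldots)$. In particular every monomial $x^ay^b$ with $a+b=5$ satisfies $\max(a,b)\ge 3$, so this ideal already contains all of $\R[x,y]_5$. Consequently $(L')^\perp \supseteq \R[x,y]_5$, and since apolarity restricts to a non-degenerate inner product on $\R[x,y]_5$ (distinct degree-5 monomials are orthogonal and self-pair to $1$), this forces $L'=0$, finishing the proof.

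The main obstacle here is mild: the whole argument hinges on recognising that, once the common factor $l$ is split off via the action property of the apolarity pairing, the hypothesis becomes a statement about the apolar ideal of the degree-5 form $L'$ that can be read off from the standard Hilbert function of the complete intersection $(x^3,y^3)$. No deeper structure of $L^\perp$ (socle degree, complete-intersection description) needs to be invoked for this lemma.
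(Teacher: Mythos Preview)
Your proof is correct and follows essentially the same approach as the paper: both arguments write $q_i = l m_i$ and use the Hilbert function $(1,2,3,2,1)$ of the complete intersection $\langle m_1^3, m_2^3\rangle$ to conclude that this ideal is all of $\R[x,y]_5$. The only cosmetic difference is packaging: the paper works directly inside the ideal $L^\perp$ to get $l^3\R[x,y]_5\subset (L^\perp)_8$, while you pass to the contracted form $L'=\bil{l^3}{L}\in\R[x,y]_5$ and show $L'=0$; these are equivalent via the action property and non-degeneracy you invoke.
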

\begin{proof}
Let $q_1=ll_1, q_2=ll_2$ with $l,l_1,l_2\in\R[x,y]_1$. By assumption $(ll_1)^3 \R[x,y]_2, (ll_2)^3\R[x,y]_2\subset L^\perp$. Since $l_1,l_2$ are a regular sequence, the Hilbert function of the ideal $\id{l_1^3,l_2^3}$ is $(1,2,3,2,1)$. Thus
\[
L^\perp\supset (ll_1)^3 \R[x,y]_2+ (ll_2)^3\R[x,y]_2=l^3 \id{l_1^3,l_2^3}_5=l^3 \R[x,y]_5.
\]
\end{proof}

\begin{lem}
\label{lem:pd_impossible}
Let $L\in(\bin)^\star$ and $F=\bil{L}{q(a,b,c)^4}$. $F$ does not vanishes at any real point $\xi\in\P^2(\R)$ corresponding to a definite binary quadratic form $q$.
\end{lem}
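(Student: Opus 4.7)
The plan is to derive a contradiction from the assumption that $L \neq 0$ satisfies $L(q^4) = 0$ for some positive definite quadratic $q \in \R[x,y]_2$ (the case $L = 0$ gives $F \equiv 0$ and is excluded implicitly, since then the statement is vacuous). The key idea is to represent $q^4$ as a continuous positive combination of eighth powers of linear forms, and then use the dual cone inequality to force $L$ to vanish on all such eighth powers.

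First I would invoke the $\gl_2(\R)$-equivariance noted earlier in the section: the action on $\R[x,y]_2 \cong \R[a,b,c]_1$ preserves $\bin$ and its dual cone, and maps positive definite quadratics to positive definite quadratics. After applying a suitable $g \in \gl_2(\R)$, I may thus assume $q = x^2 + y^2$, i.e.\ $\xi = (1\colon 0 \colon 1)$, and the hypothesis becomes $L((x^2+y^2)^4) = 0$. For $\theta \in [0, 2\pi]$, set $l_\theta = \cos\theta \cdot x + \sin\theta \cdot y$. By $SO(2)$-invariance of the integral $\int_0^{2\pi} l_\theta^8 \, d\theta$, and the fact that the space of $SO(2)$-invariant binary octics is one-dimensional, spanned by $(x^2+y^2)^4$, comparing values at $(1,0)$ gives
\[
(x^2+y^2)^4 = \frac{1}{C} \int_0^{2\pi} l_\theta^8\, d\theta, \qquad C = \int_0^{2\pi}\cos^8\theta\, d\theta > 0.
\]

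Since $l_\theta^8 = (l_\theta^2)^4$ is a fourth power of the quadratic $l_\theta^2$, it lies in $\bin$, so $L(l_\theta^8) \ge 0$ for every $\theta$. The hypothesis then gives $\int_0^{2\pi} L(l_\theta^8)\, d\theta = 0$, and by continuity and non-negativity of the integrand, $L(l_\theta^8) = 0$ for all $\theta$. By homogeneity, $L(l^8) = 0$ for every $l \in \R[x,y]_1$. Since the span of $\{l^8 : l \in \R[x,y]_1\}$ is all of $\R[x,y]_8$ (a classical consequence of the non-degeneracy of the Veronese embedding $v_8\colon \P^1 \hookrightarrow \P^8$), we conclude $L = 0$, contradicting our assumption. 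The one non-trivial step is the $SO(2)$-invariance identity, but this is elementary; the rest follows directly from the definition of the dual cone and basic facts about Veronese varieties.
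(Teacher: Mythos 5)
Your proof is correct, and it takes a genuinely different route from the paper's. The paper's argument applies Reznick's Theorem~9.5 to write $(x^2+y^2)^4$ as a sum of five eighth powers of linear forms, which forces $F$ to have at least five real zeros; it then invokes Hilbert/Kunert's classification of real zero sets of psd ternary quartics (``at most four or infinitely many'') to conclude $|\vc(F)(\R)|=\infty$, and finally applies Proposition~\ref{prop:infinite_zeros} to produce a linear form $l'$ dividing $q$, contradicting definiteness. Your proof replaces the finite Reznick decomposition by the $SO(2)$-averaging identity $(x^2+y^2)^4 = C^{-1}\int_0^{2\pi} l_\theta^8\,d\theta$, uses nonnegativity and continuity to kill $L$ on every $l_\theta^8$, and then concludes $L=0$ from the fact that eighth powers of linear forms span $\R[x,y]_8$. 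This is shorter and more self-contained: it bypasses both Proposition~\ref{prop:infinite_zeros} and the Kunert classification entirely, and it makes transparent that what the lemma is really asserting is that $q^4$ lies in the \emph{interior} of $\bin$ whenever $q$ is definite. The only point worth stating explicitly (which you do) is the exclusion of the degenerate case $L=0$, which is implicit in the paper's version as well. One small remark: your averaging step is exactly the integral formula that underlies Reznick's Theorem~9.5, so the two approaches are cousins, but your version avoids the detour through real zero counts of ternary quartics.
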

\begin{proof}
Assume otherwise. After a \cc\ we can assume that $q=x^2+y^2$.
By \cite[Thm 9.5.]{reznick1992} $q^4$ is a sum of five 8th powers of real linear forms.
This shows that $F$ vanishes at least at 5 real points, hence vanishes at infinitely many real points. By \cref{prop:infinite_zeros} there exists $l\in\R[x,y]_1$ such that $l|q$ which is impossible if $q$ is positive definite.
\end{proof}

\begin{thm}
\label{thm:at_most_3_zeros}
Let $F\in\partial(\bin)^\star=\partial\Sigma_{3,4}^2\cap U$ with $\vc(F)(\R)$ finite. Then $|\vc(F)(\R)|\le 3$.
\end{thm}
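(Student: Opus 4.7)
The plan is to argue by contradiction: suppose $F \in U \cap \partial \ter$ has exactly four real zeros $\xi_1,\dots,\xi_4 \in \P^2(\R)$ (Kunert's classification \cite{kunert2014} already bounds the finite case by four), and let $q_1,\dots,q_4 \in \R[x,y]_2$ be the binary quadratics corresponding to these points via the Veronese. First I would show that each $q_i$ is strictly indefinite. Definite forms are excluded by \cref{lem:pd_impossible}. If some $q_i = l^2$ were a real perfect square then $l^8 \in L^\perp$; after a change of coordinates sending $l$ to $x$ the form $F$ would vanish at $(1{:}0{:}0)$, and since $F$ is psd it would lie in $U \cap I((1{:}0{:}0))^2_4$. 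The Gram-matrix computation at the end of the proof of \cref{prop:infinite_zeros} then forces $F = c^4$, whose zero set is an entire line, contradicting finiteness. Similarly, if two of the $q_i$'s shared a common linear factor then \cref{lem:not_ci} would give $l^3 \in (L^\perp)_3$, and \cref{prop:infinite_zeros} would again yield that $F$ is a fourth power of a linear form, so infinitely many real zeros. Hence $q_1,\dots,q_4$ are strictly indefinite and pairwise coprime, and because $F$ is psd and vanishes at each $\xi_i$ we have $q_i^3 \in L^\perp$ for all $i$.

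The remainder of the proof is a case analysis on the bidegree $(d_1,d_2)$ of the complete intersection $L^\perp \subset \R[x,y]$, with $d_1 \le d_2$ and $d_1 + d_2 = 10$. In every case the idea is to compare the explicit description of $(L^\perp)_6$ with the four cubes $q_i^3 \in (L^\perp)_6$. For $d_1 = 1$ the apolar ideal contains a linear form, which forces $L$ to be a pure $8$th power and hence $F$ to be a fourth power of a linear form in $\R[a,b,c]_1$, giving infinitely many real zeros. For $d_1 \in \{2,3\}$ we have $(L^\perp)_6 = f_{d_1} \R[x,y]_{6-d_1}$, so $f_{d_1}$ divides every $q_i^3$; because each $q_i$ is a product of two distinct real linear forms, a brief analysis of the possible real linear-factor structure of $f_{d_1}$ forces $f_{d_1}$ to be a power of a single real linear form, which would then divide every $q_i$, contradicting pairwise coprimeness.

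The delicate cases are $d_1 \in \{4,5\}$, where $\dim (L^\perp)_6 = 4$. In the $(4,6)$ case I would write $q_i^3 = f_4 h_i + \lambda_i g_6$ with $h_i \in \R[x,y]_2$ and $\lambda_i \in \R$. Pairwise coprimeness immediately excludes $\lambda_i = \lambda_j = 0$ for distinct $i, j$ (since then $f_4$ would divide the coprime pair $q_i^3, q_j^3$), so at least three of the $\lambda_i$ are nonzero; eliminating $g_6$ between these yields $f_4 \mid \lambda_j q_i^3 - \lambda_i q_j^3$. Evaluating at each of the four roots $\xi \in \P^1(\C)$ of $f_4$ gives $(q_i/q_j)^3(\xi) = \lambda_i/\lambda_j$, so $q_i/q_j$ takes at most three values on those four roots; by pigeonhole two of them must share the same value $\mu$, which means the binary quadratic $q_i - \mu q_j$ divides $f_4$. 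Applying this observation to three pairs from $\{2,3,4\}$ exhibits $f_4$ as containing three quadratic divisors of the form $q_i - \mu q_j$, and since $\deg f_4 = 4$ these cannot all be pairwise coprime; coincidences between them force linear relations among triples of the $q_i$'s, and combining several such relations contradicts pairwise coprimeness. The $(5,5)$ case is handled by an entirely analogous elimination argument using $(L^\perp)_6 = f_5\R[x,y]_1 + g_5\R[x,y]_1$.

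The genuine difficulty will lie in the case $d_1 \in \{4,5\}$: one must keep careful track of all the degenerate sub-configurations in which the factors $q_i - \mu q_j$ become repeated, reducible over $\R$, or coincide with one another, and ensure that the pigeonhole argument on cube roots of unity produces a real contradiction in every branch rather than a trivial identity.
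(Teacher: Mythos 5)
Your approach is genuinely different from the paper's, and more ambitious: the paper settles the four-zero case by a computer calculation in {\tt Mathematica}, computing minimal primes of an ideal that encodes a $15\times 15$ determinant condition (for $F\in U\cap (I_2^2)_4$) together with $\det G=0$ for a rank-two Gram matrix, and then verifying with {\tt FindInstance} that no real solution satisfies the \emph{psd constraints on} $G$. You instead propose a purely apolarity-theoretic contradiction: all four cubes $q_i^3$ lie in $(L^\perp)_6$, and you run a case analysis on the generator degrees $(d_1,d_2)$ of the Gorenstein complete intersection $L^\perp$. The setup paragraph and the low-degree cases $d_1\le 3$ are correct: having $f_{d_1}\mid q_i^3$ for a fixed form of degree $\le 3$ and four pairwise coprime indefinite quadratics really does give a contradiction by inspecting the real/complex factorisation of $f_{d_1}$.

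The gap is in the ``delicate'' cases $d_1\in\{4,5\}$, and I believe it is not just a matter of bookkeeping. Your argument there uses only the conditions $q_i^3\in L^\perp$, that the $q_i$ are indefinite (from \cref{lem:pd_impossible}), pairwise coprime and with no three collinear. These conditions capture the fact that $F$ vanishes to order two at the four points, plus a small part of positivity, but they do \emph{not} capture the positive-semidefiniteness of the Gram matrix $G$, which is exactly the constraint the paper's computation relies on to reach a contradiction. Concretely: a quartic $F\in U$ with four real nodes need not be psd -- generically it is a product of two conics meeting in four real points, and for such a product all four singularity conditions $q_i^3\in L^\perp$ still hold (by Euler's identity every critical point of a form is also a zero). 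A dimension count suggests a positive-dimensional family of such non-psd $F\in U$ exists, and nothing in your hypotheses rules out its members having all four $q_i$ indefinite and pairwise coprime with no three collinear. If such a configuration exists, the ``stronger, psd-free'' statement you are trying to prove in the $d_1\in\{4,5\}$ cases is simply false, and no amount of pigeonhole analysis on the cube roots of $\lambda_i/\lambda_j$ can rescue it. Independently of that structural concern, the step ``coincidences between the quadratic divisors $q_i-\mu q_j$ of $f_4$ force linear relations among triples of the $q_i$'' is not justified: a shared linear factor between $q_2-\mu_{23}q_3$ and $q_2-\mu_{24}q_4$ only says those quadratics have a common root, not that $q_2,q_3,q_4$ are linearly dependent; and the $\mu$'s are in general non-real cube roots, so the putative divisors are complex quadratics whose interaction with the real factorisation of $f_4$ requires much more care than you indicate.
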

\begin{proof}
By \cite[Lemma 1.38.]{kunert2014} we have $|\vc(F)(\R)|\le 4$ since $\vc(F)(\R)$ is finite.
Assume $|\vc(F)(\R)|= 4$. Since $\vc(F)(\R)$ is finite, no three of its real zeros are collinear.
Let $q_1,\dots,q_4\in\R[x,y]_2$ be the corresponding quadratic forms. If any two of them were not a complete intersection then by \cref{lem:not_ci} there exists $l\in\R[x,y]_1$ such that $l^3\in (L^\perp)_3$ and thus $(ll')^4\in L^\perp$ for all $l'\in\R[x,y]_1$ which shows that $F$ has infinitely many zeros.

Pick three of these points $\xi_1,\xi_2,\xi_3\in\P^2(\R)$ and let $I\subset \R[a,b,c]$ be the ideal of forms vanishing at $\xi_1,\xi_2,\xi_3$. The degree 2 component $I_2$ is generated by $q_1=l_{12}l_{13}, q_2=l_{13}l_{23}, q_3=l_{13}l_{23}$ where $l_{ij}$ is the line through $\xi_i,\xi_j$. Concretely, $l_{ij}$ is given as
\[
l_{ij}=\det\begin{pmatrix}
\xi_i^1 & \xi_i^2\\ \xi_j^1 &\xi_j^2
\end{pmatrix}a+
\det\begin{pmatrix}
\xi_i^2 & \xi_i^0\\ \xi_j^2 &\xi_j^0
\end{pmatrix}b+
\det\begin{pmatrix}
\xi_i^0 & \xi_i^1\\ \xi_j^0 &\xi_j^1
\end{pmatrix}c
\]
where $\xi_i=(\xi_i^j)_j$.
Consider the $15\times 15$-matrix where the first 9 rows correspond to a basis of $U$ (with respect to some basis of $\R[a,b,c]_4$) and the last 6 correspond to generators of $(I_2^2)_4$. Since $F$ is psd it vanishes to order 2 at $\xi_1,\xi_2,\xi_3$ and is therefore contained in $U\cap (I_2^2)_4$ which shows that the determinant of this matrix is zero.

Since $F\in (I_2^2)_4$ we can represent $F=(q_1,q_2,q_3)G(q_1,q_2,q_3)^t$ with a $3\times 3$-Gram matrix with respect to the basis $(q_1,q_2,q_3)$. Since $F$ is psd (and thus sos since it is a ternary quartic) and vanishes at the real points $\xi_1,\xi_2,\xi_3$, every sos representation of $F$ corresponds to such a matrix $G$, i.e. there is no sos representation $F=\sum_{i=1}^r p_i^2$ where $\spn(p_1,\dots,p_r)\not\subset\spn(q_1,q_2,q_3)$. Since $F$ has a fourth real zero $F$ is a sum of two squares and there exists a psd matrix $G$ of rank 2. Note that $G$ cannot have rank 1 since $\vc(F)(\R)$ is finite.

We want to use \texttt{Mathematica} \cite{mathematica} to prove that this is not possible.
Let $J$ be the ideal generated by the determinant of the $15\times 15$-matrix and by the determinant of the $3\times 3$-matrix $G$. Moreover, let $\chi_G(t)$ be the characteristic polynomial of $G$, $\chi_G(t)=t^3+u_2t^2+u_1t+u_0$. We saturate the ideal $J$ at $u_1$ to get $J_1:=(J\colon u_1^\infty)$. If $u_1$ was 0 then the matrix $G$ would have rank 1 which is impossible. We can now calculate the minimal primes of $J_1$ of which there are 4.

For every minimal prime we use \texttt{FindInstance} in \texttt{Mathematica} to prove that there are no real points $\xi_1,\xi_2,\xi_3$ such that the equations in the prime ideal and the two psd conditions from the coefficients of the characteristic polynomial are satisfied.
\end{proof}

\begin{rem}
\label{rem:proof_cases}
Note that we cannot take the two equations defining $J$ and input them into \texttt{Mathematica} instead of the minimal primes as the calculation does not finish. Similarly, we can only compute the minimal primes after saturating.

To actually perform the calculations we mod out the action of $\gl_2(\R)\subset\gl_3(\R)$ which fixes the subspace $U$. The embedding is given by the $\gl_2(\R)$ action on the second Veronese of $\P^1$ induced by the $\gl_2(\R)$ action on $\P^1$.

By \cref{lem:not_ci} we may assume that any pair of $q_i$'s forms a complete intersection and that the triple is linearly independent. Moreover, by \cref{lem:pd_impossible} no $q_i$ is positive definite.

If all $q_i$ would have rank 1 they are 
\[
(1:0:0), (0:0:1), (1:2:1),
\]
but their third powers generate $\R[x,y]_8$, hence this case cannot occur. Since none of the quadratic forms is positive definite but at least one has rank 2 we can achieve one of the following two forms by using a \cc:
\[
(0:1:0), (1:a:1), (1:b:c)
\]
or
\[
(0:1:0), (1:a:-1), (1:b:c).
\]
Moreover, note that if we make the first point $(1:0:-1)$ in the last two cases the saturation does not terminate any longer.

We thus have to do the calculations above for the two cases
\begin{enumerate}
\item $(0:1:0), (1:a:1), (1:b:c)$,
\item $(0:1:0), (1:a:-1), (1:b:c)$.
\end{enumerate}
\end{rem}

\begin{rem}
The code for the two calculations can be found in the appendix.
\end{rem}

\begin{cor}
\label{cor:boundary_length}
Every point on the boundary $\partial\bin$ has 4-length at most 3.
\end{cor}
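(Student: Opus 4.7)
The plan is to exploit the dual characterization developed in \cref{prop:infinite_zeros} and \cref{thm:at_most_3_zeros}. Let $f \in \partial\bin$, which we may assume nonzero. Since $\bin$ is closed, there exists a nonzero $L \in (\bin)^\star$ with $L(f) = 0$. Fix any representation $f = \sum_{i=1}^s q_i^4$ with $q_i \in \R[x,y]_2$. Each $L(q_i^4) \ge 0$ and they sum to zero, so $L(q_i^4) = 0$ for every $i$; equivalently, each $q_i$, viewed as a point of $\P(\R[x,y]_2) = \P^2$, is a real zero of the ternary quartic $F := \bil{L}{q(a,b,c)^4} \in \ter \cap U$. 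I then split into the two cases from \cref{thm:at_most_3_zeros} and \cref{prop:infinite_zeros}.

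If $|\vc(F)(\R)| \le 3$, the $q_i$'s represent at most three projective points; two $q_i$'s mapping to the same point are real scalar multiples of each other, and such a group $\lambda_1 q,\dots,\lambda_k q$ collapses via $\sum_j (\lambda_j q)^4 = \bigl(\bigl(\sum_j \lambda_j^4\bigr)^{1/4} q\bigr)^4$, reducing the representation to length at most $3$.

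If instead $\vc(F)(\R)$ is infinite, then \cref{prop:infinite_zeros} gives $F = l^4$ for some $l \in \R[a,b,c]_1$, so every $q_i$ lies in the two-dimensional subspace $V \subset \R[x,y]_2$ of quadratics vanishing under apolarity against $l$. The proof of \cref{prop:infinite_zeros} establishes that $V = l' \cdot \R[x,y]_1$ for some $l' \in \R[x,y]_1$, so every $q_i$ factors as $q_i = l' r_i$ with $r_i \in \R[x,y]_1$; setting $g := \sum_i r_i^4$ gives $f = l'^4 g$ with $g \in \Sigma_{2,4}^4$. The main technical step is then to invoke the classical fact that every element of $\Sigma_{2,4}^4$ is a sum of at most three fourth powers of real linear forms, equivalent to positive semidefiniteness of the $3 \times 3$ Hankel catalecticant of $g$ together with Sylvester's decomposition algorithm (see \cite{reznick1992}). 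Writing $g = \sum_{j=1}^{\le 3} s_j^4$ yields $f = \sum_{j=1}^{\le 3} (l' s_j)^4$, a representation of length at most $3$, which closes both cases.
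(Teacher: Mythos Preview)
Your proof is correct and follows precisely the approach the paper sets up: the paper states the corollary without an explicit proof because it is the direct consequence of \cref{prop:infinite_zeros} and \cref{thm:at_most_3_zeros}, exactly as you argue. The finite-zeros case is immediate from the bound $|\vc(F)(\R)|\le 3$, and in the infinite-zeros case you correctly extract from \cref{prop:infinite_zeros} that every summand lies in $l'\R[x,y]_1$, reducing to the known bound $p_4(2,4)\le 3$ for $\Sigma_{2,4}^4$ from \cite{reznick1992}, which the paper also takes as known (``well understood by \cite{reznick1992}'').

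Two cosmetic remarks: the phrase ``since $\bin$ is closed'' should really be ``since $f$ lies on the boundary of the closed full-dimensional cone $\bin$'' (a supporting hyperplane exists by separation); and for the divisibility $l'\mid q_i$ you can appeal directly to the \emph{statement} of \cref{prop:infinite_zeros} (every element of $(L^\perp)_6$ is divisible by $l'^2$, and $q_i^3\in(L^\perp)_6$ since $F=l^4$ is singular of order $\ge 2$ at each real zero) rather than to its proof.
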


\begin{thm}
\label{thm:bound_py4}
The 4-Pythagoras number of binary octics satisfies $\py_4(2,8)\in\{3,4\}$.
\end{thm}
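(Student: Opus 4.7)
The proof combines the lower bound from dimension counting, established at the start of \cref{sec:Higher Pythagoras numbers}, with the upper bound coming from \cref{cor:boundary_length}. Plugging $n=2$, $s=2$, $d=2$ into the inequality
\[
p_{2s}(n,2sd)\ge \frac{\binom{n+2sd-1}{2sd}}{\binom{n+d-1}{d}}
\]
yields $\py_4(2,8)\ge 9/3 = 3$, so it suffices to prove the upper bound $\py_4(2,8)\le 4$.

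For the upper bound I would take an arbitrary $f\in\bin$ and split into two cases. If $f\in\partial\bin$, then \cref{cor:boundary_length} already gives that the 4-length of $f$ is at most 3 and there is nothing to do. Otherwise $f$ lies in the interior of $\bin$, which is a closed, pointed, full-dimensional convex cone in $\R[x,y]_8$. Fix any nonzero $q\in\R[x,y]_2$ and consider the ray $\{f-tq^4 : t\ge 0\}$. Since $\bin$ is closed and pointed, the set $T=\{t\ge 0 : f-tq^4\in\bin\}$ is a closed interval $[0,t_0]$ with $t_0\in(0,\infty)$: $t_0>0$ because $f$ is interior, and $t_0<\infty$ because the cone is pointed, so $\bin$ contains no line. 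At the extreme value, $f_0:=f-t_0q^4\in\partial\bin$, since otherwise $t_0$ could be enlarged slightly while keeping $f-tq^4$ inside $\bin$. By \cref{cor:boundary_length}, $f_0$ has 4-length at most 3. The decomposition $f=f_0+t_0q^4$ then exhibits $f$ as a sum of at most four fourth powers of quadratic forms, proving $\py_4(2,8)\le 4$.

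There is no real obstacle here: all of the genuine difficulty was absorbed by the analysis of the dual cone $(\bin)^\star$ and the classification of its boundary quartics carried out in \cref{prop:infinite_zeros} and \cref{thm:at_most_3_zeros}, which together yielded \cref{cor:boundary_length}. The remaining step is a standard convex-geometric push-to-the-boundary argument, and the only thing one has to verify is that $f_0$ is a genuine element of $\bin$ lying on its topological boundary, which is immediate from the construction. What this proof does \emph{not} settle is whether $\py_4(2,8)$ equals $3$ or $4$; deciding this would require either producing an interior form that provably has 4-length exactly $4$, or upgrading the boundary bound to all of $\bin$, neither of which follows from the argument above.
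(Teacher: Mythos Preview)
Your proof is correct and follows essentially the same strategy as the paper: the lower bound comes from the dimension count, and the upper bound is obtained by pushing an interior point to the boundary along a direction given by a single fourth power and then invoking \cref{cor:boundary_length}. The only cosmetic difference is that the paper phrases the push-to-boundary step via a compact cone basis and the line through $x^8$, whereas you subtract $tq^4$ directly; the content is identical.
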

\begin{proof}
From the dimension count at the beginning of \cref{sec:Higher Pythagoras numbers} we get $\py_4(2,8)\ge 3$.

Let $f\in\interior\bin$ and let $K$ be a compact cone basis of $\bin$ containing $x^8$. For some $\lambda>0$ we have $\lambda f\in K$. Consider the line through $x^8$ and $\lambda f$. This line intersects the boundary of $K$ in $x^8$ and another point $g$. By \cref{cor:boundary_length} $g$ has length at most 3 and thus $\lambda f$ has length at most 4 as a convex combination of $g$ and $x^8$.
\end{proof}

Knowing the length of boundary points also allows us to prove that the boundary is irreducible.

\begin{prop}
\label{prop:linearly_dependent}
Let $D\subset\Rx_2^3$ be the subvariety containing linearly dependent triples. Then $\codim \phi_2^3(D)=2$.
\end{prop}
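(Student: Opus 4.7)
The proposition reduces to showing $\dim \phi_2^3(D) = 7$, since $\dim \Rx_8 = 9$. First I would parametrize $D$ by the incidence variety
\[
\tilde D = \{(V, (p_1, p_2, p_3)) \in \grass(2, \Rx_2) \times \Rx_2^3 : p_i \in V \text{ for all } i\}.
\]
The second projection $\tilde D \to \Rx_2^3$ is birational onto $D$, with inverse $\underline p \mapsto (\spn(p_1, p_2, p_3), \underline p)$ on the open locus where $p_1, p_2, p_3$ span a $2$-dimensional subspace, so $\dim D = \dim \tilde D = \dim \grass(2, \Rx_2) + \dim V^3 = 2 + 6 = 8$.

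For the upper bound, for each $V \in \grass(2, \Rx_2)$ set $W_V := \spn\{q^4 : q \in V\} \subset \Rx_8$; if $V = \spn(q_1, q_2)$ then $W_V = \spn(q_1^{4-i} q_2^i : 0 \leq i \leq 4)$ is $5$-dimensional. Since $p_i \in V$ forces $p_i^4 \in W_V$, we have $\phi_2^3(D) \subset \bigcup_V W_V$, and the union is the image of the $7$-dimensional incidence variety $\{(V, f) : f \in W_V\}$ under the projection to $\Rx_8$. Hence $\dim \phi_2^3(D) \leq 7$.

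For the matching lower bound, I would compute the rank of the restricted differential $\dphi_2^3(\underline p)|_{T_{\underline p} D}$ at the concrete smooth point $\underline p = (x^2, y^2, x^2+y^2) \in D$. Parametrizing $D$ locally by $(p_1, p_2, \alpha, \beta) \mapsto (p_1, p_2, \alpha p_1 + \beta p_2)$ and setting $p_3 = \alpha p_1 + \beta p_2$, the chain rule identifies the image of the restricted differential at $(x^2, y^2, 1, 1)$ with
\[
(p_1^3 + p_3^3) \cdot \Rx_2 + (p_2^3 + p_3^3) \cdot \Rx_2 + \R \cdot p_3^3 p_1 + \R \cdot p_3^3 p_2.
\]
Expanding these $8$ spanning forms in the monomial basis $\{x^{8-i} y^i\}_{i=0}^{8}$ of $\Rx_8$ produces an explicit $8 \times 9$ matrix whose rank is exactly $7$: the five ``even'' coordinates $x^{2k} y^{8-2k}$ are fully covered, while the four ``odd'' coordinates $x^{2k+1} y^{7-2k}$ receive only the $2$-dimensional contribution spanned by $(p_1^3 + p_3^3) xy$ and $(p_2^3 + p_3^3) xy$. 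Combined with the upper bound, this yields $\codim \phi_2^3(D) = 9 - 7 = 2$.

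The main obstacle is this explicit rank computation; although elementary, one must carefully track the even- and odd-degree monomial contributions to see that the odd part contributes only two (rather than four) extra dimensions. A conceptually cleaner alternative is to combine the upper bound with Sylvester's theorem (generic Waring rank $3$ for binary quartics) to show $\phi_2^3(D)$ is Zariski dense in $\bigcup_V W_V$, provided one verifies separately that $V \mapsto W_V$ is injective and that a generic element of $W_V$ lies in no other $W_{V'}$.
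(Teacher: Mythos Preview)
Your proof is correct and follows essentially the same strategy as the paper: parametrize $D$ by $(p_1,p_2,\lambda_1,\lambda_2)\mapsto(p_1,p_2,\lambda_1 p_1+\lambda_2 p_2)$ and study the differential of the composite map into $\R[x,y]_8$. The only notable difference is in the upper bound: the paper bounds the image of the differential directly via the inclusion $W\subset qV+\spn(p_1,p_2)^4$ with $\dim\le 2+5$, whereas you obtain the same bound globally from the containment $\phi_2^3(D)\subset\bigcup_V W_V$; your explicit rank-$7$ verification at $(x^2,y^2,x^2+y^2)$ makes precise what the paper leaves as a genericity claim.
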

\begin{proof}
We parametrize the set of linearly dependent triples by $\R[x,y]_2^2\times\R^2$ as $(p_1,p_2,\lambda_1 p_1+\lambda_2 p_2)$.
\[
\phi'\colon \Rx_2^2\times\R^2\to \Rx_{8},\quad (p_1,p_2,\lambda_1,\lambda_2)\mapsto p_1^4+p_2^4+(\lambda_1p_1+\lambda_2p_2)^4
\]
The differential is given by ($h:=\lambda_1p_1+\lambda_2p_2$)
\begin{align*}
\dphi'(p_1,p_2,\lambda_1,\lambda_2)\colon &\Rx_2^2\times\R^2\to \Rx_{8},\\&(f,g,\mu_1,\mu_2)\mapsto 4f(p_1^3+h^3\lambda_1)+4g(p_2^3+h^3\lambda_2)+4h^3p_1\mu_1+4h^3p_2\mu_2.
\end{align*}
Hence, the image is $W:=\id{p_1^3+\lambda_1h^3,p_2^3+\lambda_2h^3,h^3p_1,h^3p_2}_8$ which we show has dimension at most 7. The dimension is maximal for a generic choice of $p_1,p_2,\lambda_1,\lambda_2$, hence we may assume that $p_1,p_2$ are linearly independent. Let $q\in \R[x,y]_2$ be such that $p_1,p_2,q$ form a basis of $\R[x,y]_2$ and let $V:=\spn(p_1^3+\lambda_1 h^3,p_2^3+\lambda_2 h^3)$. Then
\begin{align*}
W&=V\spn(p_1,p_2,q)+h_3\spn(p_1,p_2)\\
&=qV+\spn(p_1,p_2)\spn(V,h^3)\\
&=qV+\spn(p_1,p_2)\spn(p_1^3,p_2^3,h^3)\\
&\subset qV+\spn(p_1,p_2)^4
\end{align*}
As $\dim \spn(p_1,p_2)^4\le 5$ we see $\dim W\le 7$. Moreover, for a generic choice of $p_1,p_2,\lambda_1,\lambda_2$, the dimension of $W$ is equal to 7.
\end{proof}

Let $\mathcal{G}$ denote the Zariski-closure of the set
\[
\left\lbrace(q_1,q_2,q_3)\in\R[x,y]_2^3\colon q_1,q_2,q_3 \text{ linearly independent}, \id{q_1^3,q_2^3,q_3^3}_8\neq \R[x,y]_8\right\rbrace.
\]

\begin{thm}
\label{thm:algebraic_boundary}
The algebraic boundary of $\bin$ is an irreducible hypersurface. It is given as the Zariski-closure of the image of the hypersurface $\mathcal{G}$ under the sum of fourth powers map, i.e.
\[
\partial_a\bin=\ol{\phi_2^3(\mathcal{G})}.
\]
\end{thm}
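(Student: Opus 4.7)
The plan is to establish $\partial_a\bin\subset\ol{\phi_2^3(\mathcal{G})}$ via a differential argument, and then deduce equality together with irreducibility by a dimension comparison. For the inclusion, take a generic smooth point $f$ on any irreducible component of $\partial_a\bin$. By \cref{cor:boundary_length}, $f$ has $4$-length at most $3$, so $f=q_1^4+q_2^4+q_3^4$ for some $q_i\in\R[x,y]_2$. By \cref{prop:linearly_dependent}, the image of the linearly dependent triples has codimension $2$ in $\R[x,y]_8$ and so cannot be dense in any codimension-$1$ component of $\partial_a\bin$; hence I may assume $q_1,q_2,q_3$ are linearly independent. If we had $\id{q_1^3,q_2^3,q_3^3}_8=\R[x,y]_8$, then $\dphi_2^3(q_1,q_2,q_3)$ would be surjective, and by the implicit function theorem $\phi_2^3$ would send a Euclidean neighborhood of $(q_1,q_2,q_3)$ onto a Euclidean neighborhood of $f$ in $\R[x,y]_8$. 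Since $\phi_2^3(\R[x,y]_2^3)\subset\bin$, this would place $f$ in the Euclidean interior of $\bin$, contradicting $f\in\partial\bin$. Therefore $(q_1,q_2,q_3)\in\mathcal{G}$ and $f\in\phi_2^3(\mathcal{G})$; by Zariski density of generic smooth points, $\partial_a\bin\subset\ol{\phi_2^3(\mathcal{G})}$.

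Next I would compare dimensions. Since $\bin$ is a closed full-dimensional convex cone, $\partial_a\bin$ is nonempty and pure of dimension $8$ in $\R[x,y]_8$. On the other hand, $\mathcal{G}$ is cut out in $\R[x,y]_2^3\cong\R^9$ by the vanishing of the $9\times 9$ Jacobian determinant of $\phi_2^3$ (on the linearly independent locus, then Zariski-closed), so it is a hypersurface and $\dim\ol{\phi_2^3(\mathcal{G})}\le 8$. Once $\ol{\phi_2^3(\mathcal{G})}$ is known to be irreducible, an irreducible variety of dimension at most $8$ containing the pure $8$-dimensional set $\partial_a\bin$ must coincide with it, proving both claims of the theorem simultaneously.

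The main obstacle is thus irreducibility of $\ol{\phi_2^3(\mathcal{G})}$. I would approach this through the dual picture from \cref{prop:infinite_zeros} and \cref{thm:at_most_3_zeros}: for a generic $L\in\partial(\bin)^\star$, the associated psd ternary quartic $F_L\in U$ has exactly three real zeros, whose corresponding binary quadratic forms provide the essentially unique length-$3$ decomposition of the boundary octic dual to $L$. This identifies a Zariski-dense subset of $\mathcal{G}$ with the image of an incidence variety over an irreducible component of $\partial(\bin)^\star$, and irreducibility of the parameter side (coming from that of the discriminant of ternary quartics restricted by the linear section $U$) then yields irreducibility of $\mathcal{G}$, and hence of $\ol{\phi_2^3(\mathcal{G})}$. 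A more hands-on alternative is to verify directly that the Jacobian determinant of $\phi_2^3$ admits no nontrivial factorization after removing the component coming from linearly dependent triples; both routes hinge on excluding an unwanted second $8$-dimensional component in $\mathcal{G}$.
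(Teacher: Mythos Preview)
Your overall structure is the same as the paper's: use \cref{cor:boundary_length} to show $\partial\bin\subset\phi_2^3(\R[x,y]_2^3)$, observe that any boundary preimage must be a singular point of $\phi_2^3$, discard the linearly dependent locus via \cref{prop:linearly_dependent}, and conclude by irreducibility of $\mathcal{G}$ together with a dimension count.

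Where you diverge is in how you handle the irreducibility of $\mathcal{G}$. The paper takes exactly your ``hands-on alternative'': it writes down the $9\times 9$ Jacobian determinant of $\phi_2^3$ as a polynomial in the $9$ coefficients of $(q_1,q_2,q_3)$, observes that it factors into two pieces, one of which is the $3\times 3$ determinant of the coefficient matrix (the linearly dependent locus), and then checks with \texttt{Macaulay2} that the remaining factor is irreducible over $\C$. That computer check is the entire content of the irreducibility step in the paper.

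Your primary route through the dual cone is not in the paper and, as written, is not complete. The claim that the relevant component of $\partial(\bin)^\star$ is irreducible (``coming from that of the discriminant of ternary quartics restricted by the linear section $U$'') is not automatic: a linear slice of an irreducible hypersurface need not be irreducible, and you would also have to show that the incidence correspondence over this locus is irreducible and maps dominantly onto $\mathcal{G}$. None of this is established elsewhere in the paper, so this route would require substantial additional work before it could replace the direct computation.
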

\begin{proof}
Since every element of $\partial\bin$ has length 3 we can compute the boundary as (some of the) singular values of the map 
\[
\phi_2^3\colon \R[x,y]_2^3\to\R[x,y]_8,\quad (q_1,q_2,q_3)\mapsto q_1^4+q_2^4+q_3^4.
\]
The differential at a point $(q_1,q_2,q_3)$ is a linear map between 9-dimensional vector spaces of which we can compute the determinant with respect to the monomial bases. Considering the coefficients of $q_1,q_2,q_3$ as indeterminates, this determinant factors into two irreducible polynomials. One is the determinant of the $3\times 3$ coefficient matrix of the quadratic forms $q_1,q_2,q_3$. The other cuts out the hypersurface $\mathcal{G}$ which we may check is irreducible over $\C$ with {\tt Macaulay2} \cite{M2}.

By \cref{prop:linearly_dependent} the image of linearly dependent triples does not give rise to a component of the boundary.

Therefore, the algebraic boundary is given as the Zariski-closure of the image of the hypersurface $\mathcal{G}$ under the map $\phi_2^3$.
\end{proof}

\begin{rem}
Since we know that the algebraic boundary is irreducible, it is also given as the following dual variety by \cite[Theorem 3.7.]{sinn2015}. For some faces $\face$ of $\ter$ whose relative interior points consist of psd ternary quartics with exactly 3 real zeros the intersection $\face\cap U$ has dimension 1. A generator of this 1-dimensional cone spans an extreme ray of $\ter\cap U$. If $X$ denotes the variety given as the Zariski-closure of all such extreme rays, its dual variety $X^\star$ is precisely the algebraic boundary of $\bin$.
\end{rem}

\begin{rem}
\label{rem:different_number_of_length_3_reps}
From numerical calculations in \texttt{Julia} \cite{beks2017} using the \texttt{HomotopyContinuation.jl} package \cite{hc} we know that there are binary forms with different numbers of real length 3 representations. Semi-algebraic subsets of $\bin$ where forms have different numbers of decompositions over $\R$ have to be separated by $\ol{\phi_2^3(\mathcal{G})}$ by \cref{prop:linearly_dependent} and \cref{thm:algebraic_boundary}. The computations also show that $\bin\setminus\ol{\phi_2^3(\mathcal{G})}$ does indeed have several connected components (assuming that all solutions have been found using Homotopy Continuation).

Up to sign and permutation of the summands $f_i$ has exactly $i$ real length 3 representations.
\begin{align*}
f_2&=(4x^2-5xy+2y^2)^4+(4x^2-4xy+2y^2)^4+(4x^2+2xy- y^2)^4\\
f_4&=(6x^2+10xy-2y^2)^4+(3x^2-2xy+5y^2)^4+(10x^2+4xy-6 y^2)^4\\
f_6&=(x^2+4xy+8y^2)^4+(10x^2+xy+3y^2)^4+(10x^2+10xy-10 y^2)^4
\end{align*}

The number of complex length 3 representations was constant at 76 for all (generic) examples computed.
The fact that all representations of the $f_i$'s that have been found numerically, give correct decompositions can be proven using the \texttt{cert} command in the \texttt{HomotopyContinuation} package.
This does not prove that all representations have been found even though this seems to be the case here as the same number of complex solutions $29184=384\cdot 76$ has been found in every example calculated. This number is the number of representations without accounting for permutations of the summands and scaling summands by roots of unity.
\end{rem}

\begin{example}
\label{ex:example_dual_element}
One example of a psd ternary quartic in the cone $(\bin)^\star=(\ter\cap U)$ is given by the following Gram matrix
\[\tiny
\left(\begin{array}{r}
-918256 c^{7} + 14398014 c^{6} - 284749896 c^{5} + 34054823830 c^{4} - 389117333868 c^{3} + 384885454002 c^{2} - 323875629964 c - 248202 \\
-659350 c^{7} + 20329939 c^{6} - 68106915 c^{5} - 4687738385 c^{4} + 48361507226 c^{3} - 47864048337 c^{2} + 39815341637 c + 30859 \\
-95223 c^{7} - 2098390 c^{6} + 35378289 c^{5} + 1998413271 c^{4} - 24797139347 c^{3} + 24583084470 c^{2} - 20789333367 c - 15937
\end{array}\right.
\]
\[\tiny
\begin{array}{r}
-659350 c^{7} + 20329939 c^{6} - 68106915 c^{5} - 4687738385 c^{4} + 48361507226 c^{3} - 47864048337 c^{2} + 39815341637 c + 30859\\
-823044 c^{7} + 6018130 c^{6} + 23370756 c^{5} + 30387414 c^{4} - 98730364 c^{3} + 120560334 c^{2} - 64520364 c - 50\\
4786 c^{7} - 790488 c^{6} + 6085491 c^{5} - 99186325 c^{4} + 1409481342 c^{3} - 1392614400 c^{2} + 1196060035 c + 927
\end{array}
\]
\[\tiny
\left.\begin{array}{r}
-95223 c^{7} - 2098390 c^{6} + 35378289 c^{5} + 1998413271 c^{4} - 24797139347 c^{3} + 24583084470 c^{2} - 20789333367 c - 15937\\
4786 c^{7} - 790488 c^{6} + 6085491 c^{5} - 99186325 c^{4} + 1409481342 c^{3} - 1392614400 c^{2} + 1196060035 c + 927\\
-5663 c^{7} - 26893 c^{6} - 858726 c^{5} + 128759213 c^{4} - 1359409919 c^{3} + 1343865411 c^{2} - 1122516338 c - 859
\end{array}\right)
\]
where $c$ is the real zero of 
\[
c^8 - c^7 + c^6 - 133c^5 - 135542c^4 + 1550171c^3 - 1534319c^2 + 1290239c + 1
\]
that is approximately $\minus 21$ and with respect to the basis $(q_1,q_2,q_3)$ with
\begin{align*}
q_1&=-(x+z)(cx-z)\\
q_2&=(x + z)((c-16)x-(c+1)y - 17z)\\
q_3&=-(cx - z)((c-16)x-(c+1)y - 17z).
\end{align*}
It vanishes at the real points $(0:1:0),(1:1:-1),(1:-16:c)$ and therefore exposes the 3-dimensional face
\[
\mathrm{cone}((xy)^4,(x^2+xy-y^2)^4,(x^2-16xy+cy^2)^4)
\]
of the cone $\bin$. The linear functional in $\R[x,y]_8^\du$ is a generator of the degree 8 component of the Gorenstein ideal $\id{(xy)^3,(x^2+xy-y^2)^3,(x^2-16xy+cy^2)^3}_8$, a complete intersection minimally generated in degree $5$.

We also give examples of psd ternary quartics in $U$ with exactly 1 and 2 real zeros. Wrt the basis $x(x+z),xy,z(x+z),yz$ the following Gram matrix defines a form in $(\bin)^\star$ with real zeros $(0:1:0)$ and $(1:0:-1)$:
\[
\left(\begin{array}{rrrr}
2 & 0 & \minus 1 & 0 \\
0 & 3 & 0 & 0 \\
\minus 1 & 0 & 2 & 0 \\
0 & 0 & 0 & 3
\end{array}\right).
\]
In particular, taking the sum of the two quartics with 3 and 2 real zeros gives a quartic in $(\bin)^\star$ with the real zero $(0:1:0)$.
\end{example}

%
%

\begin{cor}
Let $f\in\partial\bin$ be such that $f\notin l^4\interior(\Sigma^4_{2,4})$ for any $l\in\R[x,y]_1$. Then $f$ has a unique sum of fourth powers decomposition (up to sign and permutation of the summands).
\end{cor}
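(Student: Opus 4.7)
The plan is to combine the length bound of \cref{cor:boundary_length} with the structural analysis of boundary elements of the dual cone from \cref{prop:infinite_zeros} and \cref{thm:at_most_3_zeros}, and then conclude uniqueness via a Mason--Stothers-type argument. First, by \cref{cor:boundary_length}, every decomposition of $f$ has length at most $3$. Since $f\in\partial\bin$, I would pick a functional $L\in(\bin)^\star$ with $L(f)=0$. For any decomposition $f=\sum q_i^4$, each summand $q_i^4\in\bin$ pairs non-negatively with $L$ and the summands add to $0$, so $L(q_i^4)=0$ for every $i$. Hence each projective point $[q_i]\in\P^2$ (via the Veronese identification $ax^2+bxy+cy^2\mapsto(a:b:c)$) lies in the real zero set of the psd ternary quartic $F=\bil{L}{q(a,b,c)^4}\in\ter\cap U$. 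By the dichotomy of \cref{prop:infinite_zeros} and \cref{thm:at_most_3_zeros}, either $F=l^4$ for some $l\in\R[a,b,c]_1$, or $|\vc(F)(\R)|\le 3$.

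In the case $F=l^4$, \cref{prop:infinite_zeros} furnishes a linear form $l'\in\R[x,y]_1$ with $(L^\perp)_6\subset l'^2\R[x,y]_4$. Since $F$ is psd and singular at each real zero, we have $q_i^3\in L^\perp$, forcing $l'\mid q_i$; write $q_i=l'm_i$. Then $f=l'^4 h$ with $h=\sum m_i^4\in\Sigma_{2,4}^4$. The hypothesis rules out $h\in\interior(\Sigma_{2,4}^4)$, so $h$ lies on the boundary of $\Sigma_{2,4}^4$ and has a real zero; because each $m_i^4\ge 0$, all $m_i$ vanish at this zero and are thus proportional to a single linear form $m$. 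Hence $h=\alpha m^4$ and $f=\alpha(l'm)^4$, which has the trivially unique length-$1$ decomposition, up to sign.

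In the case $|\vc(F)(\R)|\le 3$, let $\tilde q_1,\ldots,\tilde q_r$ (with $r\le 3$) be normalized representatives of the real zeros of $F$. Every summand in every decomposition of $f$ is a real multiple of some $\tilde q_j$, so after collecting like terms any decomposition yields $f=\sum_{j=1}^r a_j\tilde q_j^4$ with $a_j\ge 0$; a minimum-length decomposition picks exactly one summand $(a_j^{1/4}\tilde q_j)^4$ per $j$ with $a_j>0$. Uniqueness therefore reduces to linear independence of $\tilde q_1^4,\ldots,\tilde q_r^4$ in $\R[x,y]_8$, and I expect this to be the main obstacle. A nontrivial relation $\sum c_j\tilde q_j^4=0$ can, by separating positive and negative coefficients and using that each $\tilde q_j^4\ge 0$ pointwise, be rearranged into a polynomial identity of the form $p_1^4+p_2^4=p_3^4$ with quadratic $p_i$'s; after factoring out $\gcd(p_1,p_2)$ and dehomogenizing, the Mason--Stothers theorem would force the $p_i$'s to be proportional, contradicting the distinctness of the $\tilde q_j$'s as projective points and yielding the desired linear independence.
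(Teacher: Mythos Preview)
Your argument has a genuine gap in the case $F=l^4$. You claim that $h\in\partial\Sigma_{2,4}^4$ forces $h$ to have a real zero, and then conclude that all $m_i$ are proportional, so that $f$ has length~1. But the boundary of $\Sigma_{2,4}^4$ is \emph{not} the same as the boundary of the psd cone: for instance $x^4+y^4$ lies on $\partial\Sigma_{2,4}^4$ (the psd binary quartic $a^2b^2$ in the dual cone vanishes on it) yet is strictly positive on $\P^1(\R)$. Consequently your reasoning misses exactly the forms $f=l^4(l_1^4+l_2^4)$ with $l_1,l_2$ linearly independent, which are length~2 boundary elements satisfying the hypothesis and for which every supporting $L$ does yield $F=l^4$ (by \cref{lem:not_ci} the two summands share a linear factor, forcing $l^3\in L^\perp$ and hence infinitely many real zeros of $F$). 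The paper closes this case by invoking \cite[Thm~5.1]{reznick1992}, which gives uniqueness of length~$\le 2$ decompositions on $\partial\Sigma_{2,4}^4$; once you know every summand of $f$ is divisible by $l'$, uniqueness for $h$ transfers to uniqueness for $f$.

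Your treatment of the finite-zeros case is correct and takes a genuinely different route from the paper. You reduce to linear independence of $\tilde q_1^4,\dots,\tilde q_r^4$ and then invoke Mason--Stothers on an identity $p_1^4+p_2^4=p_3^4$; after clearing the gcd the radical has at most $6$ roots while the degree is $8$ (or $4$ versus $3$ in the degenerate case), giving the contradiction. The paper instead argues directly: from two representations it extracts $(\lambda_1^4-\mu_1^4)q_1^4=c_1q_2^4+c_2q_3^4$, uses that each $q_i$ is indefinite (\cref{lem:pd_impossible}) to handle the same-sign case via real zeros, and factors $c_1q_2^4+c_2q_3^4$ explicitly in the opposite-sign case to force collinearity of $q_1,q_2,q_3$. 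Your approach is cleaner in that it does not use indefiniteness of the $q_i$, at the cost of importing Mason--Stothers; the paper's approach is fully elementary.
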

\begin{proof}
If $f=q^4$ then $q$ is indefinite by \cref{lem:pd_impossible}. Hence, this is the only representation and we may now assume that $f$ has length at least 2.
If $f=l^4(q_1^4+q_2^4)\in l^4 \partial(\Sigma_{2,4}^4)$ the representation is unique by \cite[Thm 5.1.]{reznick1992}.

Thus we may now assume that there exists $F\in(\bin)^\star$ with finitely many zeros and such that $F(f)=0$. If $f=\sum_{i=1}^r p_i^4$ then $F$ vanishes at all points $p_i$. Let $q_1,\dots,q_k$ be binary quadratic forms corresponding to the zeros of $F$. Then the $p_i$ are scalar multiples of some of the $q_i$. Then any representation of $f$ can only use the $q_i$ as summands.
Assume $k=3$. If $f$ has at least 2 representations, then
\[
(\lambda_1q_1)^4+(\lambda_2q_2)^4+(\lambda_3q_3)^4=(\mu_1q_1)^4+(\mu_2q_2)^4+(\mu_3q_3)^4
\]
for some $\lambda_i,\mu_i\in\R$. Without loss of generality $\lambda_1^4\neq \mu_1^4$. Hence,
\[
q_1^4(\lambda_1^4-\mu_1^4)=(\mu_2q_2)^4+(\mu_3q_3)^4-(\lambda_2q_2)^4-(\lambda_3q_3)^4=c_1q_2^4+c_2q_3^4.
\]
If both $c_1,c_2$ have the same sign, both real linear factors of $q_1$ divide both $q_2$ and $q_3$, hence they are all the same and $f$ has length 1.
If they have different signs, then the right-hand-side factors over the reals as 
\[
c_1q_2^4+c_2q_3^4=\pm (\sqrt{|c_1|}q_2^2+\sqrt{|c_2|}q_3^2)(\sqrt[4]{|c_1|}q_2-\sqrt[4]{|c_2|}q_3)(\sqrt[4]{|c_1|}q_2+\sqrt[4]{|c_2|}q_3).
\]
However, now $q_1$ is a scalar multiple of the last two factors, showing that the three points $q_1,q_2,q_3$ are collinear which implies that $F$ has infinitely many zeros, a contradiction.
If one of the $c_i$ is equal to zero then two of the $q_i$ coincide, i.e. $F$ has only two zeros which is not true.

Lastly, consider the case $k=2$. Then any representation of $f$ can only use $q_1$ and $q_2$ and we get 
\[
(\lambda_1q_1)^4-(\mu_1q_1)^4=(\mu_2q_2)^4-(\lambda_2q_2)^4
\]
which shows that $q_1$ is a scalar multiple of $q_2$.
\end{proof}

\begin{rem}
The assumption $f\notin l^4\interior(\Sigma_{2,4}^4)$ is necessary as a general element of $\Sigma_{2,4}^4$ has a 1-dimensional family of length 3 representation which can be read off the map
\[
\phi_1^3\colon\R[x,y]_1^3\to\R[x,y]_4,\, (l_1,l_2,l_3)\mapsto l_1^4+l_2^4+l_3^4
\]
where the left-hand-side has dimension 6 and the right-hand-side has dimension 5.
\end{rem}

\section{Facial structure}
\label{sec:Facial structure}

We study the facial structure of the convex cone $\bin$.
We prove that there are exactly four types of exposed faces on $\bin$. Three of them are polyhedral, they are of the form $F_i:=\cone(q_1^4,\dots,q_i^4)$ with $i\in \{1,2,3\}$. And one is not polyhedral, it is of the form $l^4\Sigma_{2,4}^4$ for some $l\in\R[x,y]_1$. Moreover, there are some non-exposed faces contained in $l^4\Sigma_{2,4}^4$.

\begin{dfn}
Let $n\in\N$ and $C\subset\R^n$ be a convex cone. A \todfn{face} $F$ of $C$ is a convex subset of $C$ such that 
for any $x,y\in C$ whenever $x+y\in F$ both $x,y$ are contained in $F$.
A face $F$ is called \todfn{exposed} if there is an affine hyperplane $H$ such that $C\cap H=F$.
\end{dfn}

\begin{rem}
In general, not every face is exposed. However, this does hold if the cone $C$ is a polyhedron or more generally a spectrahedron \cite{rg1995}.
\end{rem}

\begin{prop}
Every exposed face of $\bin$ is of one of the four types $F_1,F_2,F_3$ or  $l^4\Sigma_{2,4}^4$ for some $l\in\R[x,y]_1$. Moreover, there are exposed faces of every such type.
\end{prop}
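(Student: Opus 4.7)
The plan is to parametrize proper exposed faces of $\bin$ by functionals on the boundary of the dual cone $(\bin)^\star$, and then read off the possible face structures from the classification carried out in the previous subsection. Specifically, any nontrivial exposed face of $\bin$ has the form $F_L := \{f\in\bin : L(f) = 0\}$ for some nonzero $L \in \partial(\bin)^\star$, and I identify $L$ with the associated psd ternary quartic $F = \bil{L}{q(a,b,c)^4} \in \ter \cap U$. Writing any $f = \sum_i p_i^4 \in F_L$ and using nonnegativity of $F$ on $\R^3$ forces $F(p_i) = 0$ for every summand, so the face is completely determined by $\vc(F)(\R) \subset \P^2(\R)$, whose points parametrize binary quadratic forms.

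From \cref{prop:infinite_zeros} and \cref{thm:at_most_3_zeros} there are only two possibilities for $\vc(F)(\R)$. If it is finite, then it has size $k \in \{1,2,3\}$, and by \cref{lem:pd_impossible} each real zero corresponds to an indefinite binary quadratic $q_i$. Any summand $p_i$ of $f \in F_L$ must be a scalar multiple of one of the $q_i$, giving $F_L = \cone(q_1^4, \dots, q_k^4) = F_k$. In the remaining case $|\vc(F)(\R)| = \infty$, \cref{prop:infinite_zeros} provides $F = m^4$ for some $m \in \R[a,b,c]_1$ together with a linear form $l \in \R[x,y]_1$ satisfying $l^3 \in L^\perp$; after the change of coordinates used in the proof we may take $l = x$, in which case $m = c$ and $\vc(m) \subset \P^2$ consists precisely of the projective classes of binary quadratics divisible by $l$. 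Consequently every summand in $f \in F_L$ has $l$ as a factor, giving $F_L \subset l^4 \Sigma_{2,4}^4$. The reverse inclusion is immediate: since $L^\perp$ is an ideal, for any $l' \in \R[x,y]_1$ we have $(ll')^4 = l^3 \cdot (l\,l'^4) \in L^\perp$, hence $L((ll')^4) = 0$ and therefore $L(l^4 g) = 0$ for every $g \in \Sigma_{2,4}^4$.

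For the existence part, the polyhedral faces $F_3, F_2, F_1$ are realized by the three explicit psd ternary quartics in $\ter \cap U$ described in \cref{ex:example_dual_element}, which have $3$, $2$, and $1$ real zeros respectively. For the non-polyhedral type, after a linear change of coordinates we may take $l = y$; then $F = a^4$ defines a functional $L \in (\bin)^\star$ via $L(q^4) = a^4 \ge 0$ for $q = ax^2 + bxy + cy^2$, and its exposed face is exactly $\{\sum p_i^4 : y \mid p_i \text{ for all } i\} = y^4 \Sigma_{2,4}^4$.

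The main obstacle is the identification in the infinite-zeros case of the line $\vc(m) \subset \P^2$ with the set of binary quadratics divisible by $l$. However, this is essentially contained in the proof of \cref{prop:infinite_zeros}; once it is in hand the remainder of the argument is formal bookkeeping, and no new substantive result is needed beyond the results already established in the previous subsection.
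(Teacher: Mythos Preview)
Your proof is correct and follows essentially the same approach as the paper: both parametrize exposed faces by $L\in\partial(\bin)^\star$, split according to whether $\vc(F)(\R)$ is finite (using \cref{thm:at_most_3_zeros}) or infinite (using \cref{prop:infinite_zeros}), and invoke \cref{ex:example_dual_element} for existence. Your treatment of the infinite-zeros case is marginally more geometric---you read off $\vc(m)=\{q: l\mid q\}$ directly from $F=m^4$, whereas the paper routes through the singularity condition $q_i^3\in (L^\perp)_6$ and the divisibility statement in \cref{prop:infinite_zeros}---but the content is the same.
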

\begin{proof}
Let $F\in\partial(\bin)^\star$ be a psd ternary quartic. First assume that $\vc(F)(\R)$ is finite and $F$ has real zeros corresponding to the binary quadratic forms $q_1,\dots,q_i$ with $i\in\{1,2,3\}$. Then the face exposed by $F$ contains $\cone(q_1^4,\dots,q_i^4)$ and cannot contain any other form as we know all zeros of $F$. 
By \cref{ex:example_dual_element} there exist ternary quartics in $\partial(\bin)^\star$ with exactly one, two and three real zeros.

Now, assume $F$ has infinitely many real zeros. If $F(\sum_{i=1}^r q_i^4)=0$ and $F$ is singular at any of its real zeros we see that $F(q_i^3h)=0$ for all $i=1,\dots,r$ and all $h\in\R[x,y]_2$. By \cref{prop:infinite_zeros} $q_i$ is divisible by $l$ for some $l\in\R[x,y]_1$ and thus the face exposed by $F$ is contained in $l^4\Sigma_{2,4}^4$. Moreover, since $l^3\in L^\perp$ any form $l^3(lp)$ with $p\in \Sigma_{2,4}^4$ is contained in the face exposed by $F$.
\end{proof}

\begin{thm}
\label{thm:faces}
Let $0\neq \face\subsetneq\bin$ be a face. Then $\face$ is one of the following:
\begin{itemize}
\item an exposed face of the form $F_i=\cone(q_1^4,\dots,q_i^4)$ for some $i=1,2,3$ where all $q_j$ are indefinite binary quadratic forms,
\item an exposed face of type $l^4\Sigma_{2,4}^4$,
\item a non-exposed extreme ray of the form $\cone(l^8)=\R_+l^8$ with $l\in\R[x,y]_1$, or
\item a non-exposed face of the form $l^4\cone(l_1^4,l_2^4)$ with $l,l_1,l_2\in\R[x,y]_1$ and $l_1,l_2$ linearly independent.
\end{itemize}
\end{thm}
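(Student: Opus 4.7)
The plan is to classify every face of $\bin$ by reducing to faces of the exposed faces already determined in the preceding proposition. The key general observation is that any proper face $\face$ of $\bin$ is itself a face of some exposed face: picking any $f$ in the relative interior of $\face$, a supporting hyperplane at $f$ exposes a face of $\bin$ containing $\face$, and $\face$ is automatically a face of this exposed face. Since the exposed faces are precisely the $F_i$ for $i=1,2,3$ and the $l^4\Sigma_{2,4}^4$ for $l\in\R[x,y]_1$, it suffices to enumerate faces of each of these.

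Faces of $F_i$ are straightforward: $F_i$ is a simplicial polyhedral cone generated by the fourth powers of pairwise non-proportional indefinite quadratics, so its proper faces are the sub-cones $\cone(q_{j_1}^4,\dots,q_{j_k}^4)$ on subsets of the generators, each itself an exposed face of type $F_k$. For $l^4\Sigma_{2,4}^4$, I would use the fact that multiplication by $l^4$ is an injective linear map $\R[x,y]_4\to\R[x,y]_8$ restricting to a cone isomorphism $\Sigma_{2,4}^4\cong l^4\Sigma_{2,4}^4$; hence the face lattice of $l^4\Sigma_{2,4}^4$ coincides with that of $\Sigma_{2,4}^4$.

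To describe the faces of $\Sigma_{2,4}^4$, I would invoke Reznick's uniqueness theorem (\cite[Thm 5.1]{reznick1992}), by which every form in $\partial\Sigma_{2,4}^4$ has a unique representation as a sum of at most two fourth powers, while a generic interior form has length 3 with a 1-parameter family of representations (as noted after the previous corollary). This forces the proper non-zero faces of $\Sigma_{2,4}^4$ to be exactly the extreme rays $\cone(l'^4)$ and the 2-dim faces $\cone(l_1^4,l_2^4)$ with $l_1,l_2$ linearly independent. Translating back via multiplication by $l^4$, the proper non-zero faces of $l^4\Sigma_{2,4}^4$ are $\cone((ll')^4)$ and $l^4\cone(l_1^4,l_2^4)=\cone((ll_1)^4,(ll_2)^4)$. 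These are of type $F_1$ or $F_2$ (hence exposed) whenever all involved products are indefinite, i.e., when the underlying linear forms are pairwise non-proportional; otherwise the face contains $l^8$ and matches the last two bullets of the theorem.

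The main obstacle will be verifying that the faces containing $l^8$ are genuinely non-exposed. Suppose $L\in(\bin)^\star$ satisfies $L(l^8)=0$; then the associated ternary quartic $F=\bil{L}{q(a,b,c)^4}\in U$ vanishes at the $\P^2$-point corresponding to the rank-one perfect square $l^2\in\R[x,y]_2$, which is not an indefinite binary quadratic. By the remark following \cref{thm:at_most_3_zeros}, which classifies all ternary quartics on $\partial(\bin)^\star$ as either of the form $l_0^4$ or having all real zeros indefinite, we conclude $F=l_0^4$ for some $l_0\in\R[a,b,c]_1$. Applying \cref{prop:infinite_zeros}, there exists $l'\in\R[x,y]_1$ with $l'^3\in(L^\perp)_3$; consequently $(l'p)^4=l'^3\cdot(l'p^4)\in L^\perp$ for every $p\in\R[x,y]_1$, so the face exposed by $L$ contains the 5-dimensional cone $l'^4\Sigma_{2,4}^4$. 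This strictly contains both $\cone(l^8)$ and any 2-dim cone of the form $\cone(l^8,(ll_2)^4)$, proving non-exposedness and completing the classification.
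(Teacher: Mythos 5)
Your overall strategy---observe that every proper face of a closed cone is a face of some exposed face, then enumerate faces of each exposed face type from the preceding proposition---is a genuinely different decomposition from the paper's. The paper instead picks a boundary point $f$, exploits that it has a length-$\le 3$ representation by \cref{cor:boundary_length}, and splits on the representation length $j\in\{1,2,3\}$ and on whether the summands share a common linear factor. Your reduction is cleaner conceptually, and your non-exposedness argument for faces containing $l^8$ (pass to the dual ternary quartic, use \cref{prop:infinite_zeros} to get $F=l_0^4$ and $l'^3\in L^\perp$, conclude the exposed face contains the $5$-dimensional $l'^4\Sigma_{2,4}^4$) is in the spirit of the paper's treatment of $j=1$. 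A small fillable step you skip: you need $l'$ proportional to $l$ before you can say $l'^4\Sigma_{2,4}^4$ strictly contains $\cone(l^8)$; this does follow (after a change of coordinates, $F=a_5z^4$ forces $L\propto y^8$ and then $l'^3\in x\R[x,y]_2$ gives $l'\propto x=l$), but it should be said.

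The serious gap is your sentence ``These are of type $F_1$ or $F_2$ (hence exposed) whenever all involved products are indefinite.'' For a face $l^4\cone(l_1^4,l_2^4)=\cone((ll_1)^4,(ll_2)^4)$ with $l,l_1,l_2$ pairwise non-proportional, the two generators are indeed indefinite, so the face is \emph{of the form} $F_2$; but being of that form is not the same as being exposed. The preceding proposition only says that \emph{if} a face is exposed, then it has one of those four shapes; you are using the converse without justification. This matters: the theorem you are proving explicitly lists $l^4\cone(l_1^4,l_2^4)$ (with $l_1,l_2$ linearly independent and no restriction ruling out $l\not\propto l_1,l_2$) among the \emph{non-exposed} faces, and the remark after the theorem reiterates that $l^4(l_1^4+l_2^4)$ is not exposed while also cautioning that exposedness of genuine $F_2$ faces is not known in general. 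Your argument for non-exposedness only covers faces containing $l^8$; when $l_1,l_2\not\propto l$ the cone $\cone((ll_1)^4,(ll_2)^4)$ does \emph{not} contain $l^8$ (since $l^4\notin\spn(l_1^4,l_2^4)$ for three non-proportional linear forms), so that argument is silent here, and you have in effect asserted the opposite of what the statement claims. You would need a separate argument---e.g., showing that no $F\in\partial(\bin)^\star$ has real zero set exactly $\{ll_1,ll_2\}$ when these two quadratics share the factor $l$---to close this case.
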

\begin{proof}
Let $f\in\partial\bin$ with a representation $f=\sum_{i=1}^j q_i^4$ and $j\le 3$. Let $\face\subset\bin$ be the supporting face of $f$, i.e. the smallest face containing $f$ in its relative interior.

$j=3$: If the three quadratic forms $q_1,q_2,q_3$ are linearly independent, they do not have a common factor and thus any ternary quartic $F\in(\bin)^\star$ that vanishes on $\face$ has exactly 3 real zeros by \cref{prop:infinite_zeros}. But then $F$ already exposes this face.
If the three quadratic forms are linearly dependent and are all divisible by $l\in\R[x,y]_1$, then $F$ already has to vanish on the line through all three points by Bezout's theorem, hence has infinitely many zeros. Since $f$ has a length 3 representation $f$ is contained in the interior of $l^4\Sigma_{2,4}^4$ and hence this face is equal to $\face$.

$j=1$: Any form $l^8$ and $(ll')^4$ defines an extreme ray since this representation as a sum of fourth powers is unique.
By \cref{lem:pd_impossible} $q^4$, $q\in\R[x,y]_2$ is not on the boundary of $\bin$ whenever $q$ is positive definite.

We claim that $l^8$ is not exposed and $(ll')^4$ is exposed. After a \cc\ $l^8=x^8$. We calculate
\[
U\cap I((1:0:0))_2^2=\spn(x y^{3} + 3 x^{2} y z, y^{2} z^{2} + \frac{2}{3} x z^{3}, y^{3} z + 3 x y z^{2}, y^{4} + 12 x y^{2} z + 6 x^{2} z^{2}, z^{4}, y z^{3}).
\]
The Gram matrices of an element 
\[
x y^{3} a_{1} + 3 x^{2} y z a_{1} + y^{2} z^{2} a_{2} + \frac{2}{3} x z^{3} a_{2} + y^{3} z a_{3} + 3 x y z^{2} a_{3} + y^{4} a_{4} + 12 x y^{2} z a_{4} + 6 x^{2} z^{2} a_{4} + z^{4} a_{5} + y z^{3} a_{6}
\]
in the intersection have the form
\[
\left(\begin{array}{rrrrr}
0 & 9 a_{1} & 3 a_{1} & 36 a_{4} - \lambda_{1} & 9 a_{3} - \lambda_{2} \\
9 a_{1} & 36 a_{4} & \lambda_{1} & \lambda_{2} & 2 a_{2} \\
3 a_{1} & \lambda_{1} & 6 a_{4} & 3 a_{3} & \lambda_{3} \\
36 a_{4} - \lambda_{1} & \lambda_{2} & 3 a_{3} & 6 a_{2} - 2 \lambda_{3} & 3 a_{6} \\
9 a_{3} - \lambda_{2} & 2 a_{2} & \lambda_{3} & 3 a_{6} & 6 a_{5}
\end{array}\right)
\]
for some $\lambda_1,\lambda_2,\lambda_3\in\R$ with respect to the basis $xy, xz, y^2, yz, z^2$.
By checking all principle $2\times 2$ minors, it is easy to see that the only form in the intersection having a psd Gram matrix is $a_5z^4$ with $a_5>0$.
This shows that $x^8$ and therefore $l^8$ is not exposed. Since exposed extreme rays are dense in the set of all extreme rays and $(ll')^4$ is one $\gl_2(\R)$ orbit they have to be exposed.

$j=2$: Assume the two summands have a common factor. Write $q_1=ll_1,\, q_2=ll_2$. We claim that the supporting face of $f=l^4(l_1^4+l_2^4)$ is given as $l^4\cone(l_1^4,l_2^4)$. This is true if every $g\in\cone(l_1^4,l_2^4)$ has a unique representation as a sum of fourth powers. Since $g\in\partial\Sigma_{2,4}^4$ all representations are of length at most 2. By \cite[Thm 5.1.]{reznick1992} such representations are unique up to permutation and sign.

Assume the summands do not have a common factor. Since $f$ is on the boundary there exists a psd ternary quartic $F\in(\bin)^\star$ vanishing at the two points. Since the two do not have a common factor, $F$ has at most 3 real zeros.
If $F$ has exactly these two real zeros, then $\face=\cone(q_1^4,q_2^4)$ and $\face$ is exposed by $F$.
If $F$ has 3 real zeros, the face exposed by $F$ is $\cone(q_1^4,q_2^4,q_3^4)$ for some $q_3\in\R[x,y]_2$. This is a polyhedral face with $f$ on its boundary. Thus $\face$ has dimension 2 and is given as $\cone(q_1^4,q_2^4)$.
\end{proof}

\begin{rem}
 Note that the structure of the extreme rays also implies that the algebraic boundary of the dual cone $(\bin)^\star$ is irreducible and is given as the dual variety of the Zariski-closure of the set $(ll')^4$ over all $l,l'\in\R[x,y]_1$ by \cite[Theorem 3.7.]{sinn2015}
\end{rem}

\begin{rem}
$\bin$ is not a spectrahedron due to the existence of non-exposed faces. It is of course a spectrahedral shadow since its dual cone is.
\end{rem}

\begin{rem}
\phantom{-}
\begin{center}
\begin{tikzpicture}
		\node  (0) at (0, 8) {$\bin$};
		\node  (1) at (-2, 6) {$l^4\Sigma_{2,4}^4$};
		\node  (2) at (2, 4) {$F_3$};
		\node  (3) at (-2, 2) {$l^4(l_1^4+l_2^4)$};
		\node  (4) at (2, 2) {$F_2$};
		\node  (5) at (-2, 0) {$l^8$};
		\node  (6) at (2, 0) {$(ll')^4$};
		\node  (7) at (6, 8) {9};
		\node  (8) at (6, 6) {5};
		\node  (9) at (6, 4) {3};
		\node  (10) at (6, 2) {2};
		\node  (11) at (6, 0) {1};
		\draw  (0) -- (1);
		\draw (1) to (3);
		\draw (3) to (5);
		\draw (0) to (2);
		\draw (2) to (4);
		\draw (4) to (6);
        \draw (3) to (6);
\end{tikzpicture}
\end{center}
The numbers on the right denote the dimension of faces in that line.

On the left-hand side, the two faces $l^4(l_1^4+l_2^4)$ and $l^8$ are not exposed, whereas $l^4\Sigma_{2,4}^4$ is exposed. On the right-hand side $F_3$ and $(l_1l_2)^4$ are exposed. There are faces of type $F_2$ that are exposed, however we do not know if this is always the case.
Except for $l^4\Sigma_{2,4}^4$ and $\bin$ itself all faces are polyhedral.

\end{rem}

We see from our description of the boundary that Reznick's Conjecture \cite[Conjecture 7.1]{reznick2011} does not hold. It states that every $f\in\bin$ is ``doubly positive" in the sense that it can be written as $f=f_1^2+f_2^2$ with $f_1,f_2\in\R[x,y]_4$ both themselves psd.

\begin{cor}
\label{cor:reznick_conjecture}
Let $f\in\partial\bin$ of length 3 and such that $f\notin l^4 \Sigma_{2,4}^4$ for any $l\in\R[x,y]_1$.
Then $f$ cannot be written as $f=f_1^2+f_2^2$ with $f_1,f_2\in\R[x,y]_4$ psd.

In particular, this holds for a generic $f\in\partial\bin$.
\end{cor}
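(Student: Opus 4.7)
I would argue by contradiction: suppose $f=f_1^2+f_2^2$ with $f_1,f_2 \in \R[x,y]_4$ both positive semidefinite. Since every psd binary form is a sum of two squares, write $f_i=a_i^2+b_i^2$ for some $a_i,b_i\in\R[x,y]_2$. Applying the polarization identity
\[
(a^2+b^2)^2=\tfrac{2}{3}(a^4+b^4)+\tfrac{1}{6}\bigl((a+b)^4+(a-b)^4\bigr)
\]
to $f_1^2$ and $f_2^2$ expresses $f$ as a positive-coefficient combination of the fourth powers of the (at most eight) quadratic forms $a_i,b_i,a_i\pm b_i$ for $i=1,2$. Absorbing the positive scalars into fourth roots yields a genuine sum-of-fourth-powers representation of $f$ by these quadratics.

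Next I would invoke the preceding corollary: every summand of every sum-of-fourth-powers representation of $f$ must be a scalar multiple of one of the three quadratic forms $q_1,q_2,q_3$ appearing in the unique length-3 representation of $f$. Moreover, the hypothesis $f\notin l^4\Sigma_{2,4}^4$ forces $q_1,q_2,q_3$ to be linearly independent; otherwise the three projective points $[q_j]\in\P(\R[x,y]_2)$ would lie on a tangent line to the Veronese, i.e.\ the $q_j$ would share a common linear factor, placing $f$ into $l^4\Sigma_{2,4}^4$ as in the proof of \cref{thm:faces}.

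The crucial geometric step: if $a_i,b_i$ are linearly independent for some $i$, then $a_i,b_i,a_i+b_i,a_i-b_i$ are pairwise linearly independent and determine four distinct points on the projective line $\P(\spn(a_i,b_i))\subset\P(\R[x,y]_2)$. Each of these points must equal $[q_1],[q_2]$, or $[q_3]$; but since $q_1,q_2,q_3$ are linearly independent no projective line contains all three $[q_j]$, so at most two of the required target points are available on this line—a contradiction. Hence $a_i,b_i$ are linearly dependent for each $i$, so each $f_i$ is a nonnegative scalar multiple of a single square $q_{(i)}^2$, and consequently $f=f_1^2+f_2^2$ has $4$-length at most two, contradicting length $3$.

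For the "in particular" statement, a dimension count suffices: $\partial\bin$ is an 8-dimensional irreducible hypersurface by \cref{thm:algebraic_boundary}, a generic point of which has length $3$ by the description of the boundary as the closure of the image of length-3 decompositions, while $\bigcup_{l\in\R[x,y]_1} l^4\Sigma_{2,4}^4$ is parametrized by a $1+5=6$-dimensional family and hence has codimension at least $3$ in $\R[x,y]_8$, meeting $\partial\bin$ in codimension at least $2$. The main obstacle is the second paragraph—namely, extracting cleanly from \cref{prop:infinite_zeros} and the proof of \cref{thm:faces} that the only way three collinear points in $\P(\R[x,y]_2)$ can arise as the supports $[q_j]$ of a length-3 boundary representation is via the common-factor / tangent-to-Veronese situation captured by $l^4\Sigma_{2,4}^4$.
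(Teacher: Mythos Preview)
Your argument is correct and shares the same skeleton as the paper's: express $(a^2+b^2)^2$ via a polarization identity as a positive combination of fourth powers of quadratics lying on the line through $[a],[b]$ in $\P(\R[x,y]_2)$, then exploit this collinearity against the dual structure. The paper, however, uses the three–term identity
\[
(p^2+q^2)^2=\tfrac{1}{18}\bigl((\sqrt 3\,p+q)^4+(\sqrt 3\,p-q)^4+16q^4\bigr)
\]
and argues directly with the dual: pick any $L\in(\bin)^\star$ with $L(f)=0$; then the associated ternary quartic $F$ vanishes at the three collinear points $p_1,q_1,\sqrt 3\,p_1+q_1$, so $F$ has infinitely many real zeros and \cref{thm:faces} forces $f\in l^4\Sigma_{2,4}^4$. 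Your route instead passes through the uniqueness corollary and the linear independence of the $q_j$; this is valid but more roundabout—you must first argue that the supporting face is exactly $\cone(q_1^4,q_2^4,q_3^4)$ with linearly independent $q_j$, whereas the paper never needs to name the $q_j$ at all. Your four–term identity works fine, but already three collinear zeros suffice to kill a psd quartic on a line, so the extra summand is not needed. The ``main obstacle'' you flag (linearly dependent $q_j$ forcing a common linear factor) is precisely what the paper's direct dual argument sidesteps.
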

\begin{proof}
Assume $f=f_1^2+f_2^2$ for some $f_1,f_2\in\R[x,y]_4$ psd.
Write $f_i=p_i^2+q_i^2$ with $p_i,q_i\in\R[x,y]_2$. If both $f_1,f_2$ are squares, we may assume $f=p_1^4+p_2^4$, and thus $f$ has length at most 2.

Assume $f_1$ is not a square and consider the identity
\begin{equation}
\label{eqn:length_3}
(p_1^2+q_1^2)^2=\frac{1}{18}((\sqrt{3}p_1+q_1)^4+(\sqrt{3}p_1-q_1)^4+16q_1^4).
\end{equation}
The same identity holds also with the roles of $p_1$ and $q_1$ swapped. Let $L\in(\bin)^\star$ with $L(f)=0$. Then by \cref{eqn:length_3} we see
\[
L(p_1^4)=L(q_1^4)=L((\sqrt{3}p_1+q_1)^4)=0.
\]
However, $p_1.q_1,\sqrt{3}p_1+q_1$ are three distinct collinear points in $\P^2(\R)=\P(\R[x,y]_2)$. Therefore, $F:=\bil{L}{q(a,b,c)}$ vanishes at infinitely many real points and thus $f\in l^4\Sigma_{2,4}^4$ for some $l\in\R[x,y]_1$ by \cref{thm:faces}, a contradiction.
\end{proof}

\begin{rem}
From \cref{cor:reznick_conjecture} we also see that there is indeed a full-dimensional \sa\ set of forms in the interior of $\bin$ that can not be written in this way. 
Indeed, the set of all forms that can be written in this way are given as the image of the map
\[
\R[x,y]_2^4\to \bin,\quad (p_1,p_2,q_1,q_2)\mapsto (p_1^2+q_1^2)^2+(p_2^2+q_2^2)^2
\]
which is a closed \sa\ set. Since it is not equal to $\bin$ their difference contains a full-dimensional \sa\ set.
\end{rem}

\section{Open problems}
\label{sec:Open problems}
In this last section we propose some open problems which might stimulate further research in this area.

\begin{question}
What is the value of $p_{2s}(\R[x])$? In particular, what is the value of $p_4(\R[x])$?
\end{question}
So far the only known lower bounds is the $2s \leq p_{2s}(\R[x])$ as shown at the beginning of the Section 2.

Theorem of Becker \cite[Theorem 2.12]{becker1982} states that for any field $K$ we have $p_2(K) < \infty $ iff $p_{2s}(K)< \infty$ for some $s$ iff  $p_{2s}(K)< \infty$ for all $s$. The next conjecture asks for a generalization of the above theorem
\begin{conj}
Let $R$ be a commutative ring with identity. Then the following conditions are equivalent.
\begin{enumerate}
\item $p_2(R)<\infty$
\item $p_{2s}(R) <\infty$ for some $s$
\item $p_{2s}(R) <\infty$ for all $s$.
\end{enumerate}
\end{conj}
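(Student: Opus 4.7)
The plan is to adapt Becker's field-theoretic argument to the ring setting. The implication $(iii)\Rightarrow(ii)$ is immediate, so the real work lies in $(i)\Rightarrow(iii)$ and $(ii)\Rightarrow(i)$. The identity $a^{2s}=(a^s)^2$ embeds the cone of sums of $2s$-th powers into the cone of sums of squares, but this containment alone does not transfer Pythagoras bounds, since the squares appearing need not themselves be $2s$-th powers.

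For $(i)\Rightarrow(iii)$, the natural strategy is to produce, for each fixed $s$, a universal Pfister-type identity expressing an arbitrary square $b^2$ as a bounded sum of $2s$-th powers, possibly multiplied by auxiliary elements that are already sums of $2s$-th powers. If $f=\sum a_i^{2s}=\sum_{j=1}^{n}b_j^{2}$ where $n=p_2(R)$, iterating such an identity on each $b_j^2$ and absorbing the auxiliary factors into the base would yield a bounded $2s$-length for $f$, depending only on $n$ and on the length of the identity. Becker produced exactly such identities over fields via higher level orderings and Pfister-form machinery; the main task would be to find a ring-theoretic analogue, which is most delicate when $R$ has zero divisors or a rich ideal structure.

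For $(ii)\Rightarrow(i)$, assume $p_{2s_0}(R)=m<\infty$ for some $s_0$. The plan is to localize at a real prime ideal $\mathfrak{p}$: one has $p_{2s_0}(\kappa(\mathfrak{p}))\leq m$, and Becker's field theorem then gives $p_2(\kappa(\mathfrak{p}))<\infty$ at every real residue field. The remaining step is to lift these pointwise residue bounds to a single uniform bound on $R$. In the purely quadratic setting, the corresponding lifting problem is solved by Cassels' theorem; as noted at the beginning of the paper, Cassels' theorem has no higher-power analogue, which is the central obstruction here as well.

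The main obstacle throughout is therefore the absence of a higher-power Cassels theorem. A plausible workaround is to work on the real spectrum $\mathrm{Sper}(R)$ and establish a compactness-type statement: the minimal $2s$-length should define a constructible function bounded uniformly on $\mathrm{Sper}(R)$, provided $R$ satisfies suitable finiteness hypotheses (for example, excellence together with finite Krull or real dimension). Establishing the conjecture for \emph{arbitrary} commutative rings without such hypotheses appears to require genuinely new ideas beyond a routine adaptation of the field case, and one should expect that additional assumptions on $R$ will be needed to close the argument cleanly.
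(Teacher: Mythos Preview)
This statement appears in the paper as a \emph{Conjecture} in the Open Problems section; the paper does not claim a proof and explicitly presents it as open. The only positive evidence the paper records is that the conjecture holds for real valuation rings (via a result of Becker--Powers together with the observation that for valuation rings one may restrict to units), and that it holds trivially for the many rings treated earlier in the paper where all the relevant Pythagoras numbers are infinite.

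Your proposal is not a proof but a strategic outline, and you yourself flag the essential gaps: the absence of a higher-power Cassels theorem, and the lack of a mechanism to lift residue-field bounds back to $R$. That honest assessment is in line with the paper's position. A few points of comparison: the paper does not suggest the real-spectrum compactness route you propose, nor does it attempt to push Becker's Pfister-type identities to general rings; it simply records the field case as motivation. Your sketch of $(ii)\Rightarrow(i)$ via localizing at real primes and invoking Becker on residue fields is reasonable heuristically, but as you note, the lifting step is exactly where known techniques break down. In short, there is no discrepancy to diagnose here: the paper has no proof, and your write-up correctly identifies why.
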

    If the above conjecture is true, then the problem of finiteness of higher Pythagoras numbers could be reduced to the finiteness of the 2-Pythagoras number. This is important and usually much easier, mostly because the behavior of quadratic forms is well understood compared to forms of higher degrees. It is worth mentioning that the above conjecture does hold for real valuation rings. This follows from  \cite[Theorem 5.17]{bp1996} with the simple observation that for valuation rings it is enough to consider units which are sums of $2s$-th powers. Also, this paper provides quite a variety of rings for which the above conjecture trivially holds since all of the quantities are infinite.

\begin{question}
We saw in \cref{rem:different_number_of_length_3_reps} that a general element of $\bin$ has (up to permutation and roots of unity) 76 complex representations of length 3. Where does this number come from? Can we construct all 76 of them, and possibly even show that some of them are real?

This might be similar to length three sum of squares representations of ternary quartics. These are known to be in one-to-one correspondence (up to orthogonal equivalence) to Steiner complexes associated to the complex curve.
\end{question}

\begin{question}
The proof of \cref{thm:at_most_3_zeros} is a computation in the end. It would be nice to understand the reason why such a ternary quartic cannot have 4 real zeros.
\end{question}

{\bf Acknowledgements}. 
We want to thank Greg Blekherman for suggesting the connection to ternary quartics
which turned out to be a key point of view. Moreover, we thank Paul Breiding for help with early computations in \texttt{Julia}. Julian Vill was partially supported by DFG grant 314838170,
  GRK 2297, MathCoRe.
Part of this paper came to existence during the stay of Tomasz Kowalczyk at the Otto von Guericke University in Magdeburg. The author is very grateful for the hospitality.
\bibliographystyle{plain}
\bibliography{refs.bib}

\begin{thebibliography}{10}

\bibitem{adaz2020}
Macarena Ansola, Antonio D{\'{\i}}az-Cano, and Mar{\'{\i}}a {\'A}ngeles~Zurro.
\newblock Waring problem for binary real forms.
\newblock {\em Gac. R. Soc. Mat. Esp.}, 23(1):147--162, 2020.

\bibitem{baneckik2023}
Juliusz Banecki and Tomasz Kowalczyk.
\newblock Sums of even powers of {{\(k\)}}-regulous functions.
\newblock {\em Indag. Math., New Ser.}, 34(3):477--487, 2023.

\bibitem{becker1982}
Eberhard Becker.
\newblock The real holomorphy ring and sums of {{\(2n\)}}-th powers.
\newblock G{\'e}om{\'e}trie alg{\'e}brique r{\'e}elle et formes quadratiques,
  {Journ{\'e}es} {Soc}. {Math}. {Fr}., {Univ}. {Rennes} 1981, {Lect}. {Notes}
  {Math}. 959, 139-181 (1982)., 1982.

\bibitem{bp1996}
Eberhard Becker and Victoria Powers.
\newblock Sums of powers in rings and the real holomorphy ring.
\newblock {\em J. Reine Angew. Math.}, 480:71--103, 1996.

\bibitem{benoist2017}
Olivier Benoist.
\newblock On {Hilbert}'s 17th problem in low degree.
\newblock {\em Algebra Number Theory}, 11(4):929--959, 2017.

\bibitem{benoist2020}
Olivier Benoist.
\newblock Sums of squares in function fields over {Henselian} local fields.
\newblock {\em Math. Ann.}, 376(1-2):683--692, 2020.

\bibitem{beks2017}
Jeff Bezanson, Alan Edelman, Stefan Karpinski, and Viral~B. Shah.
\newblock Julia: a fresh approach to numerical computing.
\newblock {\em SIAM Rev.}, 59(1):65--98, 2017.

\bibitem{bk2023}
Kacper B{\l}achut and Tomasz Kowalczyk.
\newblock Sums of squares on hypersurfaces.
\newblock {\em Result. Math.}, 79(2):11, 2024.
\newblock Id/No 90.

\bibitem{blekherman2013}
Grigoriy Blekherman.
\newblock Nonnegative polynomials and sums of squares.
\newblock In {\em Semidefinite optimization and convex algebraic geometry},
  pages 159--202. Philadelphia, PA: Society for Industrial {and} Applied
  Mathematics (SIAM), 2013.

\bibitem{bcr1998}
Jacek Bochnak, Michel Coste, and Marie-Fran{\c{c}}oise Roy.
\newblock {\em Real algebraic geometry. {Transl}. from the {French}.},
  volume~36 of {\em Ergeb. Math. Grenzgeb., 3. Folge}.
\newblock Berlin: Springer, rev. and updated ed. edition, 1998.

\bibitem{hc}
Paul Breiding and Sascha Timme.
\newblock {H}omotopy{C}ontinuation.jl: {A} {P}ackage for {H}omotopy
  {C}ontinuation in {J}ulia.
\newblock In {\em International Congress on Mathematical Software}, pages
  458--465. Springer, 2018.

\bibitem{bs2023}
Matej Bre{\v{s}}ar and Peter {\v{S}}emrl.
\newblock The {Waring} problem for matrix algebras.
\newblock {\em Isr. J. Math.}, 253(1):381--405, 2023.

\bibitem{bs2023-2}
Matej Bre{\v{s}}ar and Peter {\v{S}}emrl.
\newblock The {Waring} problem for matrix algebras. {II}.
\newblock {\em Bull. Lond. Math. Soc.}, 55(4):1880--1889, 2023.

\bibitem{cr1990}
Antonio Campillo and Jes{\'u}s~M. Ruiz.
\newblock Some remarks on pythagorean real curve germs.
\newblock {\em J. Algebra}, 128(2):271--275, 1990.

\bibitem{cassels1964}
John Wiliam~Scott Cassels.
\newblock On the representation of rational functions as sums of squares.
\newblock {\em Acta Arith.}, 9:79--82, 1964.

\bibitem{cldr1982}
Man-Duen Choi, Zong~Duo Dai, Tsin~Yuen Lam, and Bruce Reznick.
\newblock The {Pythagoras} number of some affine algebras and local algebras.
\newblock {\em J. Reine Angew. Math.}, 336:45--82, 1982.

\bibitem{clpr1996}
Man-Duen Choi, Tsin~Yuen Lam, Alexander Prestel, and Bruce Reznick.
\newblock Sums of {{\(2m\)}}-th powers of rational functions in one variable
  over real closed fields.
\newblock {\em Math. Z.}, 221(1):93--112, 1996.

\bibitem{demiroglu2019}
Ye{\c{s}}im Demiro{\u{g}}lu~Karabulut.
\newblock Waring's problem in finite rings.
\newblock {\em J. Pure Appl. Algebra}, 223(8):3318--3329, 2019.

\bibitem{ding2003}
Guanggui Ding.
\newblock The isometric extension problem in the unit spheres of
  {{\(l^p(\Gamma)(p>1)\)}} type spaces.
\newblock {\em Sci. China, Ser. A}, 46(3):333--338, 2003.

\bibitem{fernando2021}
Jos{\'e}~F. Fernando.
\newblock Positive semidefinite analytic functions on real analytic surfaces.
\newblock {\em J. Geom. Anal.}, 31(12):12375--12410, 2021.

\bibitem{fq2004}
Jos{\'e}~F. Fernando and Ronan Quarez.
\newblock Some remarks on the computation of {Pythagoras} numbers of real
  irreducible algebroid curves through {Gram} matrices.
\newblock {\em J. Algebra}, 274(1):64--67, 2004.

\bibitem{M2}
Daniel~R. Grayson and Michael~E. Stillman.
\newblock Macaulay2, a software system for research in algebraic geometry.
\newblock Available at \url{http://www2.macaulay2.com}.

\bibitem{grimm2015}
David Grimm.
\newblock Lower bounds for {Pythagoras} numbers of function fields.
\newblock {\em Comment. Math. Helv.}, 90(2):365--375, 2015.

\bibitem{harrison1975}
David Harrison.
\newblock A {Grothendieck} ring of higher degree forms.
\newblock {\em J. Algebra}, 35:123--138, 1975.

\bibitem{hoffman1999}
Detlev~W. Hoffmann.
\newblock Pythagoras numbers of fields.
\newblock {\em J. Am. Math. Soc.}, 12(3):839--848, 1999.

\bibitem{ika1999}
Anthony Iarrobino and Vassil Kanev.
\newblock {\em Power Sums, Gorenstein Algebras, and Determinantal Loci}.
\newblock Lecture Notes in Mathematics. Springer Berlin Heidelberg, 1 edition,
  1999.

\bibitem{mathematica}
Wolfram~Research{,} Inc.
\newblock Mathematica, {V}ersion 13.3.
\newblock Champaign, IL, 2023.

\bibitem{kishore2022}
Krishna Kishore.
\newblock Matrix {Waring} problem.
\newblock {\em Linear Algebra Appl.}, 646:84--94, 2022.

\bibitem{mk2022}
Tomasz Kowalczyk and Miska Piotr.
\newblock On waring number of henselian rings.
\newblock {\em Mathematika}, (to appear), 2024.

\bibitem{krasensky2022}
Jakub Kr{\'a}sensk{\'y}.
\newblock A cubic ring of integers with the smallest {Pythagoras} number.
\newblock {\em Arch. Math.}, 118(1):39--48, 2022.

\bibitem{krs2022}
Jakub Kr{\'a}sensk{\'y}, Martin Ra{\v{s}}ka, and Ester Sgallov{\'a}.
\newblock Pythagoras numbers of orders in biquadratic fields.
\newblock {\em Expo. Math.}, 40(4):1181--1228, 2022.

\bibitem{kp2023}
Jakub Kr{\'a}sensk{\'y} and Pavlo Yatsyna.
\newblock On quadratic {Waring}'s problem in totally real number fields.
\newblock {\em Proc. Am. Math. Soc.}, 151(4):1471--1485, 2023.

\bibitem{kunert2014}
Aaron Kunert.
\newblock {\em Facial Structure of Cones of non-negative Forms}.
\newblock PhD thesis, Universität Konstanz, Konstanz, 2014.

\bibitem{ls1994}
Chi-Kwong Li and Wasin So.
\newblock Isometries of {{\(\ell_ p\)}}-norm.
\newblock {\em Am. Math. Mon.}, 101(5):452--453, 1994.

\bibitem{lors2019}
Samuel Lundqvist, Alessandro Oneto, Bruce Reznick, and Boris Shapiro.
\newblock On generic and maximal {{\(k\)}}-ranks of binary forms.
\newblock {\em J. Pure Appl. Algebra}, 223(5):2062--2079, 2019.

\bibitem{msw2023}
Jaros{\l}aw Mirek, Wojciech S{\l}omian, and B{\l}a{\.z}ej Wr{\'o}bel.
\newblock On the solution of {Waring} problem with a multiplicative error term:
  dimension-free estimates.
\newblock {\em Proc. Am. Math. Soc.}, 151(8):3365--3379, 2023.

\bibitem{pliego2021}
Javier Pliego.
\newblock On {Waring}'s problem in sums of three cubes.
\newblock {\em Mathematika}, 67(1):235--256, 2021.

\bibitem{pv2021}
Ricardo~A. Podest{\'a} and Denis~E. Videla.
\newblock The {Waring}'s problem over finite fields through generalized {Paley}
  graphs.
\newblock {\em Discrete Math.}, 344(5):13, 2021.
\newblock Id/No 112324.

\bibitem{prestel1984}
Alexander Prestel.
\newblock Model theory of fields: {An} application to positive semidefinite
  polynomials.
\newblock {\em M{\'e}m. Soc. Math. Fr., Nouv. S{\'e}r.}, 16:53--65, 1984.

\bibitem{rg1995}
Motakuri {Ramana} and A.~J. {Goldman}.
\newblock {Some geometric results in semidefinite programming.}
\newblock {\em {J. Glob. Optim.}}, 7(1):33--50, 1995.

\bibitem{reznick1992}
Bruce Reznick.
\newblock {\em Sums of even powers of real linear forms}, volume 463 of {\em
  Mem. Am. Math. Soc.}
\newblock Providence, RI: American Mathematical Society (AMS), 1992.

\bibitem{reznick2011}
Bruce Reznick.
\newblock Blenders.
\newblock In {\em Notions of positivity and the geometry of polynomials.
  Dedicated to the memory of Julius Borcea}, pages 345--373. Basel:
  Birkh{\"a}user, 2011.

\bibitem{ruiz1985}
Jes{\'u}s~M. Ruiz.
\newblock Pythagorean real curve germs.
\newblock {\em J. Algebra}, 94:126--144, 1985.

\bibitem{scheiderer2017}
Claus Scheiderer.
\newblock Sum of squares length of real forms.
\newblock {\em Math. Z.}, 286(1-2):559--570, 2017.

\bibitem{schmid1994}
Joachim Schmid.
\newblock Sums of fourth powers of real algebraic functions.
\newblock {\em Manuscr. Math.}, 83(3-4):361--364, 1994.

\bibitem{sinn2015}
Rainer Sinn.
\newblock Algebraic boundaries of convex semi-algebraic sets.
\newblock {\em Res. Math. Sci.}, 2:18, 2015.
\newblock Id/No 3.

\bibitem{wooley2016}
Trevor~D. Wooley.
\newblock On {Waring}'s problem for intermediate powers.
\newblock {\em Acta Arith.}, 176(3):241--247, 2016.

\end{thebibliography}

\newpage

\section*{{\tt Macaulay2} Code}

\begin{verbatim}
R=QQ[t,a0,a1,a2,b0,b1,b2,c0,c1,c2,e1,e2,e3,a,b,c];
U={a^2*b^2+a^3*c*2/3,a*b^3+a^2*b*c*3,b^2*c^2+2/3*a*c^3,b^3*c+a*b*c^2*3,
b^4+12*a*b^2*c+6*a^2*c^2,a^4,a^3*b,c^4,b*c^3}; --basis of the subspace U
X={a^4,a^3*b,a^3*c,a^2*b^2,a^2*b*c,a^2*c^2,a*b^3,a*b^2*c,a*b*c^2,a*c^3,
b^4,b^3*c,b^2*c^2,b*c^3,c^4}; --basis of R[a,b,c]_4
lab=(a1*b2-a2*b1)*a+(a2*b0-a0*b2)*b+(a0*b1-a1*b0)*c;
lac=sub(lab,{b0=>c0,b1=>c1,b2=>c2});
lbc=sub(lab,{a0=>c0,a1=>c1,a2=>c2});
q1=lab*lac; q2=lab*lbc; q3=lbc*lac; --generators of I_2                                                                                      
L={q1^2,q1*q2,q1*q3,q2^2,q2*q3,q3^2}; --basis of I_2^2       
V=matrix{join(U,L)};
(M,C)=coefficients(V,Variables=>{a,b,c},Monomials=>X);

Adjugate = (G) -> (
n := degree target G;
m := degree source G;
matrix table(n, n, (i, j) -> (-1)^(i+j) * det(submatrix(G,
                                {0..j-1, j+1..n-1},{0..i-1, i+1..m-1})))
);

getEquations = (p1,p2,p3,p4,p5,p6,p7,p8,p9) -> (
G1:=sub(C,{a0=>p1,a1=>p2,a2=>p3,b0=>p4,b1=>p5,b2=>p6,c0=>p7,c1=>p8,c2=>p9});
G1ad := Adjugate G1;
de := det G1;
G := 1/2*matrix {{2*G1ad_(9,2), G1ad_(10,2), G1ad_(11,2) },{G1ad_(10,2),
     2*G1ad_(12,2), G1ad_(13,2)},{G1ad_(11,2), G1ad_(13,2), 2*G1ad_(14,2)}}; 
--this is the 3x3 Gram matrix
cp := det (t*id_(R^3)-G);
cc := coefficients(cp,Variables=>t,Monomials=>{t^2,t});
{de,det G,(cc#1)_(0,0),(cc#1)_(1,0)} --output the two determinants that should 
--vanish, the coefficient of the characteristic polynomial of G that needs to be 
--negative in order for G to be psd and the coefficient that we saturate with
);

allEq = (p1,p2,p3,p4,p5,p6,p7,p8,p9) -> (
E := getEquations(p1,p2,p3,p4,p5,p6,p7,p8,p9);
I := ideal(radical(ideal(E#0)),radical(ideal(E#1))); --we take radicals to 
--remove unnecessary powers of some factors, this is not relevant for
--the minimal primes
J := saturate(I,radical(ideal(E#3)));
Jmin := minprimes J;
{toExternalString Jmin,toExternalString E#2,toExternalString E#3}
);
E1=allEq(0,1,0,1,e1,1,1,e2,e3); --q_i lin.ind. gives condition e_3 != 1
E2=allEq(0,1,0,1,e1,-1,1,e2,e3); --q_i lin.ind. gives condition e_3 != -1
\end{verbatim}

\end{document}